\newdimen\AAdi%
\newbox\AAbo%
\def\AAk#1#2{\s_etbox\AAbo=\hbox{#2}\AAdi=\wd\AAbo\kern#1\AAdi{}}%
\def\AAr#1#2#3{\s_etbox\AAbo=\hbox{#2}\AAdi=\ht\AAbo\raise#1\AAdi\hbox{#3}}%
\font\tenmsb=msbm10 at 12pt \font\sevenmsb=msbm7 at 8pt
\font\fivemsb=msbm5 at 6pt
\newtheorem{theorem}{Theorem}
\newtheorem{remark}[theorem]{Remark}
\newtheorem{lemma}[theorem]{Lemma}
\newtheorem{proposition}[theorem]{Proposition}
\numberwithin{equation}{section} \numberwithin{theorem}{section}
\renewcommand{\topmargin}{0cm}
\renewcommand{\oddsidemargin}{5mm}
\renewcommand{\evensidemargin}{5mm}
\renewcommand{\textwidth}{150mm}
\renewcommand{\textheight}{230mm}
\def\C{\mathbb C}
\def\R{\mathbb R}
\def\S{\mathbb S}
\def\na{\nabla}
\def\bn{\overline\nabla}
\def\ir#1{\mathbb R^{#1}}
\def\f#1#2{\frac{#1}{#2}}
\def\a{\alpha}
\def\be{\beta}
\def\r{\Re_{I\!V}}
\def\p#1{\partial #1}
\def\de{\delta}
\def\De{\Delta}
\def\e{\eta}
\def\ep{\epsilon}
\def\G{\Gamma}
\def\g{\gamma}
\def\k{\kappa}
\def\la{\lambda}
\def\La{\Lambda}
\def\lan{\langle}
\def\ran{\rangle}
\def\Om{\Omega}
\def\th{\theta}
\def\Th{\Theta}
\def\Si{\Sigma}
\def\r{\rho}
\begin{document}

\title
{Existence and non-existence of  minimal graphs}

\author{Qi Ding}
\address{Shanghai Center for Mathematical Sciences, Fudan University, Shanghai 200438, China}
\email{dingqi@fudan.edu.cn}
\author{J. Jost}
\address{Max Planck Institute for Mathematics in the Sciences, Inselstr. 22, 04103 Leipzig, Germany}
\email{jost@mis.mpg.de}
\author{Y.L. Xin}
\address{Institute of Mathematics, Fudan University, Shanghai 200433, China}
\email{ylxin@fudan.edu.cn}

\begin{abstract}
We study the Dirichlet problem for minimal surface systems in
arbitrary dimension and codimension via mean curvature flow,
and obtain the existence of minimal graphs over arbitrary mean convex bounded $C^2$ domains for a large class of prescribed boundary data.
This result can be seen as a natural generalization of the classical sharp criterion for solvability of the minimal surface equation by Jenkins-Serrin.
In contrast, we also construct a class of prescribed boundary data on just mean convex domains for which the Dirichlet problem in codimension 2
is not solvable. Moreover, we study existence and the uniqueness of minimal graphs by perturbation.
\end{abstract}

\thanks{The authors would like to express their sincere
gratitude to the referees for valuable comments that  improved the quality of the manuscript}

\date{}

\maketitle \tableofcontents

\section{Introduction}
The Dirichlet problem for minimal graphs is one of the classical problems in the theory of nonlinear elliptic PDEs. It has been investigated for over a century, starting with the fundamental work of  Bernstein \cite{B}, Haar \cite{H} and Rado \cite{R}, and it has inspired the development of methods for solving   nonlinear elliptic PDEs and the regularity of their solutions. Many deep and important results were achieved, and for instance the papers of Jenkins-Serrin \cite{JS} and Lawson-Osserman \cite{LO} can be considered as classics in the field. Nevertheless, this problem still poses difficult challenges when we move from the classical case of surfaces in $\R^3$ to minimal graphs of arbitrary dimension and codimension. In this paper, we make a systematic new contribution to that general problem.

In codimension 1, the graphic function that describes a  minimal graph over an $n$-dimensional domain $\Om\subset\R^n$ satisfies the minimal surface equation
\begin{equation}\aligned\label{MS0}
\left(1+|Du|^2\right)\De u-\sum_{i,j=1}^n u_iu_ju_{ij}=0\qquad \mathrm{in}\ \Om.
\endaligned
\end{equation}
For $n=2$, when the domain $\Om$ is convex, the Dirichlet problem for
\eqref{MS0} is solvable for arbitrary continuous boundary data. This
was achieved by successive efforts in the papers of  Bernstein
\cite{B}, Haar \cite{H} and Rado \cite{R} already mentioned. On the
other hand, the Dirichlet problem is not necessarily solvable when
$\Om$ is non-convex, as pointed out by Bernstein. In fact, Finn \cite{F}
constructed such counterexamples. For $n>2$, Gilbarg \cite{G} and
Stampacchia \cite{S} established the existence of solutions to
\eqref{MS0} for smooth boundary data and strictly convex smoothly
bounded domains. Jenkins-Serrin \cite{JS} relaxed convexity of the
domain to mean convexity, and  gave a sharp criterion for the
solvability. When $\p\Om$ is not mean convex, they found smooth
boundary data for which the Dirichlet problem of \eqref{MS0} is not
solvable (see also \cite{GT}). When, however, some smallness condition is imposed on the boundary data, depending on the geometry of the boundary, then Jenkins-Serrin \cite{JS} could still solve the Dirichlet problem. This, in fact, also follows from earlier work of Korn \cite{Korn} by interpolation, see \cite{Wid,Wil}. Williams \cite{Wil} could solve the Dirichlet problem for boundary data with small Lipschitz norm. Also, the solution of the Dirichlet problem for the minimal surface equation is unique for  fixed $C^0$-boundary data.   Thus, the situation for codimension 1 can be considered as well understood.

In higher codimensions, however, the situation is much more complicated, as was shown in a seminal paper by Lawson-Osserman \cite{LO}. In dimension $n=2$, they  showed the solvability  of the Dirichlet problem for the minimal surface system with arbitrary continuous boundary data on a bounded convex domain in $\R^2$.   Again, in general, these solutions are not unique. Uniqueness fails also for reasons that do not apply in codimension 1. In fact, Sauvigny \cite{Sau} showed that from non-unique parametric solutions, that is, solutions that cannot be represented as graphs, in dimension 2 and codimension 1, one can obtain non-unique graphic solutions in dimension 2 and codimension $\ge 3$.
Moreover, Xu-Yang-Zhang \cite{XYZ} obtained the existence of boundary functions on unit disks for which infinitely many analytic solutions and at least one nonsmooth Lipschitz solution exist simultaneously.

In dimension $n\ge 4$, Lawson and Osserman  gave non-existence examples for Dirichlet problems on unit balls with higher codimensions (Theorem 6.1, \cite{LO}). It is therefore natural to investigate under which conditions on the  boundary data  the Dirichlet  problem for minimal graphs with dimension $n>2$ and codimension $m\ge 2$ can be solved.

Reflecting upon the fact that for codimension 1, there is a general existence result, while for higher codimension there are severe obstructions to the existence, it is natural to first consider situations that can be seen as some perturbation or extension of the codimension 1 result to higher codimension. That is the approach that we take in this paper.

We study the Dirichlet problem for minimal graphs with arbitrary dimensions and codimensions, and obtain existence and uniqueness of minimal graphs over arbitrary mean convex bounded $C^2$ domain for a large class of prescribed boundary data, see Theorem \ref{main0} below. While on one hand, our result includes the classical result of Jenkins-Serrin in the codimension one case, on the other hand, it provides an alternative condition for the solvability of the Dirichlet problem for minimal graphs over mean convex domains in higher codimensions.

Let $\Om$ be a bounded domain in $\R^n$ with $\p\Om\in C^2$. Let $d(x)=d(x,\p\Om)$ for all $x\in\Om$, and $\la_\Om$ be the maximum of 0 and the largest eigenvalue of $D^2d$ on $\p\Om$ (see section 2 for more details). Note that $\la_\Om=0$ for the convex $\Om$.

\begin{theorem}\label{main0}
For any mean convex bounded $C^2$ domain $\Om$ with diameter $l$, $m\ge2$ and any $\varphi\in C^2(\overline{\Om})$, there is a constant $\ep_{\varphi}>0$ depending on $n,m$, $\la_\Om l$, $\sup_{\Om}|D\varphi|$ and $l\sup_{\Om}|D^2\varphi|$ such that if $\psi=(\psi^1,\dots ,\psi^{m-1}) \in C^2(\overline{\Om},\R^{m-1})$ satisfies
\begin{equation}\label{000}\aligned
\sum_{\a=1}^{m-1}\left(l\sup_{\Om}|D^2\psi^\a|+\sup_{\Om}|D\psi^\a|\right)\le \ep_{\varphi},
\endaligned
\end{equation}
then there is a solution $u=(u^1,\cdots,u^m)\in C^\infty(\Om,\R^m)\cap C^{1,\g}(\overline{\Om},\R^m)$ to the minimal surface system
\begin{equation}\label{000a}
\left\{\begin{split}
g^{ij}u^\a_{ij}=0\qquad \mathrm{in}\ \Om\\
u^\a=\psi^\a\qquad \mathrm{on}\ \p\Om\\
\end{split}\right.\qquad\qquad\qquad \mathrm{for}\ \a=1,\cdots,m,
\end{equation}
with $g_{ij}=\de_{ij}+\sum_\a u^\a_iu^\a_j$ and $\psi^m=\varphi$, where $\g$ is an arbitrary constant in $(0,1)$.
\end{theorem}
The important point here is that the smallness assumption \eqref{000} is only imposed on $m-1$ of the boundary components $\psi^1,\cdots,\psi^{m-1}$. The remaining component
is an arbitrary $C^2$-function. Therefore, as already mentioned, our result includes those obtained earlier for the codimension 1 case.
The constant $\ep_{\varphi}$ can in principle be computed explicitly, but since our value is presumably far from optimal, we do not bother to do so. 
In any case, by the Lawson-Osserman counterexample, some restriction on the boundary data is necessary. 
Moreover,  the constant $\ep_{\varphi}$ should be small, but it cannot be chosen independently of $\varphi$ (see Theorem \ref{NONEXIST} for details).
We should also mention that Wang \cite{W} obtained some results  without uniqueness when all the boundary data are small. But because of that assumption, his results do not include those known for the codimension 1 case.

Theorem \ref{main0} looks like a perturbation  of the codimension 1 case to higher codimension, but it cannot be obtained easily from  the implicit function theorem and geometric measure theory. The reason is that the constant $\ep_{\varphi}$ is independent of the upper bound for the curvature of $\p\Om$.
In general, the elliptic system \eqref{000a} lacks uniqueness, which makes it difficult to study with classic continuity methods. 
Our strategy is utilizing the (graphic) mean curvature flow (with boundary) to approach \eqref{000a}, where the flow is a parabolic version of \eqref{000a}. 
Although this method has been studied in some cases (see \cite{Hu}\cite{W} for instance, and see \cite{Sm} for a survey on the flow of higher codimension),
the difficulty here is to show there are geometric quantities uniformly bounded along the flow under our initial condition that can control the flow.

Let $f=(f^1,\cdots,f^m)$ denote a (short-time) solution of the parabolic system corresponding to \eqref{000a}. Let $v_f$ denote the slope function of graph$_{f(\cdot,t)}$ (see \eqref{DEfvphi}), and $\Th_f$ denote the function related to the 2-dilation of $f$ (see \eqref{2-dil-phi} and \eqref{DEFThf}). For $\Th_f>0$, $\Th_f$  attains its minima and $v_f$ attains its maxima both at the parabolic boundary (see Lemma 5.3 in \cite{TW} and \eqref{evologvf}).
Under the a priori hypothesis  $\Th_f>0$, we derive interior gradient estimates of $f^1,\cdots,f^{m-1}$ using H\"older gradient estimates of the parabolic version and Huisken's monotonicity formula \cite{Hui}, and boundary gradient estimates of $f^1,\cdots,f^{m}$ using suitable auxiliary functions. 
In particular, we further need $|Df^1|,\cdots,|Df^{m-1}|$ small to deduce an 'effective' boundary gradient estimate of $f^m$ (compared with $v_f$). Moreover, the interior gradient estimate of $f^m$ is derived via $v_f$. In all, the estimates of $v_f$ and $\Th_f$ are not independent, but inseparable. After a delicate computation, we can prove that $v_f$ is uniformly bounded and $\Th_f>0$ along the flow under our initial condition. With \cite{DJX}, we are able to deduce the uniform $C^{1,\g}$-estimate for $f$, which implies the long-time existence of the flow. As a result, there is a subsequence $t_i\rightarrow\infty$ so that $\mathrm{graph}_{f(\cdot,t_i)}$ converges to a solution of \eqref{000}.

We can also study the Dirichlet problem of the system \eqref{000a} by perturbation of a given minimal graph of codimension one using the implicit function theorem. However, in this situation we need the bound of the curvature of $\p\Om$ from both above and below (see Theorem \ref{ep-Ex}, compared with Theorem \ref{main0}).
When comparing our result with the Lawson-Osserman counterexample,
we are lead to the question to what extent also non-perturbative
results are possible. But this is a question for future research.

On convex domains, we can control the constants to obtain an
existence result for the Dirichlet problem with a quantitative bound related to 
the counterexample of \cite{LO} and the Bernstein results in \cite{JXY} (see Theorem \ref{mainMCF*} for the proof).
Here, the role of the constant $b_0$ below lies in controlling the gradient of the graphic mean curvature flow with the boundary condition satisfying \eqref{cond0} below.
\begin{theorem}\label{mainMCF*}
For a convex bounded $C^2$ domain $\Om$ and any constant $b_0\in(1,9]$, let $\psi^1,\cdots,\psi^{m}$ be $C^2$-functions on $\overline{\Om}$ with
\begin{equation}\aligned\label{cond0}
\sum_{\a=1}^{m}\left(enlb_0\sup_{\Om}|D^2\psi^\a|+\sup_{\Om}|D\psi^\a|\right)<\sqrt{b_0-1},
\endaligned
\end{equation}
where $l$ is the diameter of $\Om$. Then there is a solution $u=(u^1,\cdots,u^m)\in C^\infty(\Om,\R^m)\cap C^{1,\g}(\overline{\Om},\R^m)$ for any $\g\in(0,1)$ to the minimal surface system \eqref{000a} with $u=\psi$ on $\p\Om$ and $\sup_\Om\left(\mathrm{det}\left(\de_{ij}+u^\a_iu^\a_j\right)\right)^{1/2}<\sqrt{b_0}$.
\end{theorem}

As indicated, the  construction of Lawson-Oseerman gives many non-existence examples
for higher codimensions (Theorem 6.1, \cite{LO}). In section 6, we also construct many non-existence examples for the Dirichlet problem for minimal graphs in dimension $>2$ and codimension $2$ over any mean convex (but not convex) domain, see Theorem \ref{NONEXIST} for concrete results. In particular, this implies that the constant $\ep_{\varphi}$ in Theorem \ref{main0} cannot be only small but independent of $\varphi$.

In the last section we consider the uniqueness of the Dirichlet problem for minimal graphs over a mean convex domain. In \cite{LO} Lawson-Osserman
showed that there exists a real analytic function $\phi:\p D\to \ir{2}$ with the property that there are at least three distinct solutions of the corresponding problem, where $D$ is the  unit disk in $\ir{2}$ (Theorem 5.1, \cite{LO}). Moreover, one of these solutions represents an unstable minimal surface.  Lee-Wang \cite{LW} also considered the uniqueness. They proved a uniqueness theorem for nonparametric minimal submanifolds whose graphic functions are both distance-decreasing and equal on the boundary. In contrast, Sauvigny \cite{Sau} developed a general construction to produce non-unique solutions in codimension $\ge 3$. We prove a new uniqueness result, Proposition \ref{UNI}, which states that the solution in Theorem \ref{ep-Ex} is unique under a certain condition on the higher regularity of the boundary.

\section{Preliminaries}

For an open set $\Om\subset\R^n$, we consider a $C^2$ isometric immersion $X:\ \Om\rightarrow\R^{n+m}$. Then $X$ is a minimal immersion if and only if
\begin{equation}\aligned\label{Oms}
\sum_{i,j=1}^n\f{\p}{\p x_i}\left(\sqrt{\mathrm{det} g_{kl}}g^{ij}\f{\p X}{\p x_j}\right)=0,
\endaligned
\end{equation}
where $g_{ij}=\lan\p X/\p x_i,\p X/\p x_j\ran$, and the matrix $(g^{ij})$ is the inverse of $(g_{ij})$.
The immersion $X$ is called \emph{non-parametric} if it has the form $X(x)=(x,u(x))$ for some vector-valued function $u=(u^1,\cdots,u^m):\ \Om\rightarrow\R^m$.
Putting $U(x)=(U^1(x),\cdots,U^{n+m}(x))=(x,u(x))$, in this case the system \eqref{Oms} becomes
\begin{equation}\label{npms}
\sum_{i,j=1}^n\f{\p}{\p x_i}\left(\sqrt{\mathrm{det} g_{kl}}g^{ij}\f{\p U^a}{\p x_j}\right)=0\qquad \text{for all}\ a,
\end{equation}
where now $g_{ij}=\de_{ij}+\sum_{\a=1}^m\p_{x_i}u^\a\p_{x_j}u^\a$.
From this, one sees \cite{O} (or \cite{LO})  that \eqref{npms} may also be written as
\begin{equation}\aligned\label{Nms}
\sum_{i,j=1}^ng^{ij}\f{\p^2 u^\a}{\p x_i\p x_j}=0\qquad \mathrm{for}\ \a=1,\cdots,m.
\endaligned
\end{equation}
If $m=1$, the above minimal surface system reduces to the following single equation
\begin{equation}\aligned\label{Nms1}
\mathrm{div}\left(\f{Du}{\sqrt{1+|Du|^2}}\right)=0.
\endaligned
\end{equation}
De Giorgi \cite{De} showed that Lipschitz solutions to the minimal surface equation \eqref{Nms1} are smooth (see also \cite{M}, \cite{S}, for instance).
However, such regularity cannot extend to  Lipschitz solutions of the minimal surface system \eqref{Nms} since  Lawson-Osserman \cite{LO} have constructed non-parametric minimal cones, that is, nonsmooth Lipschitz solutions.
Concerning interior regularity, Morrey \cite{Mo1,Mo2} showed that $C^1$ solutions $u$ of the system \eqref{Nms} are smooth.

Let $\De$ and $D$ denote the Laplacian and the Levi-Civita connection of $\R^n$, respectively.
For any Lipschitz function $\varphi$ on $\Om$, let $|D\varphi|$ denote the Lipschitz norm of $\varphi$ at the considered point.
If $\varphi\in C^1(\Om)$, then $|D\varphi|$ is the norm of the gradient of $D\varphi$.
If $\varphi\in C^2(\Om)$, we define $|D^2\varphi|=\sup_{|\xi|=1}|D^2\varphi(\xi,\xi)|$.
For any vector-valued function $\phi=(\phi^1,\cdots,\phi^m)\in C^1(\Om,\R^m)$, we define 2-dilation of $\phi$ on $\Om$ by
\begin{equation}\aligned\label{2-dil-phi}
\sup_\Om\big|\La^2d\phi\big|=\sup_{x\in\Om}\big|\La^2d\phi(x)\big|=\sup_{x\in\Om,1\le i<j\le n}\mu_i(x)\mu_j(x),
\endaligned
\end{equation}
where $\{\mu_k(x)\}_{k=1}^n$ are the singular values of the Jacobi matrix $d\phi(x)=(\p_{x_i}\phi^\a(x))_{n\times m}$.

We denote the distance from $\p \Om$ by
$$d(x)=d(x,\p\Om)=\inf_{y\in\p\Om}|x-y|$$
for each $x\in\overline{\Om}$ and
$$\Om_{s}\triangleq\{y\in\Om|\ d(y,\p\Om)>s\}\qquad \mathrm{for\ any}\ s>0.$$
We further assume $\p\Om\in C^2$.
Let $\la_1(D^2d),\cdots,\la_{n-1}(D^2d),0$ denote the $n$ eigenvalues of $(d_{ij})_{n\times n}$ at the points in $\overline{\Om}$ where $d$ is twice differentiable. (This is the case in some neighborhood of $\p \Om$.)
Then $-\la_1(D^2d),\cdots,-\la_{n-1}(D^2d)$ on $\p\Om$ are the principal curvatures of $\p\Om$.
If $\max_{1\le i\le n-1}\la_i(D^2d)\le0$ on $\p\Om$, then $\Om$ is convex. If $\sum_{i=1}^{n-1}\la_i(D^2d)\le0$ on $\p\Om$, then $\p\Om$ is mean-convex, i.e., the mean curvature of $\p\Om$ is nonnegative. Let $\la_\Om$ be the maximum of zero and the largest eigenvalue of $D^2d$ on $\p\Om$, i.e.,
$$\la_{\Om}\triangleq\max_{\p\Om}\left\{0,\max_{1\le i\le n-1}\la_i(D^2d)\right\}\ge0.$$
For any $x\in\Om$ where  $d$ is differentiable (for instance, in some neighborhood of $\p \Om$),  there exists a unique $y_x\in\p\Om$ such that $d(x)=|x-y_x|$. In particular, $d$ is twice differentiable at $x$.
From Lemma 14.17 in \cite{GT}, we have
\begin{equation}\aligned\label{laOm}
\max_{1\le i\le n}\la_i(D^2d)\le\la_\Om\qquad \mathrm{at}\ x.
\endaligned
\end{equation}
Let $r_{_\Om}$ be the infimum of the radii of exterior balls of $\Om$.
Namely, $r_{_\Om}$ is the largest constant such that for any $p\in\p\Om$ there is a unique open ball $B_{r_{_\Om}}(q)\subset\R^n\setminus\Om$ centered at $q$ and with the radius $r_{_\Om}$ such that $\overline{B_{r_{_\Om}}(q)}\cap\overline{\Om}=p$. It is clear that  $r_{_\Om}\le1/\la_\Om$. Thus, if $\la_\Om=0$, then $1/\la_\Om=\infty$.
In general, however, $r_{_\Om}\neq1/\la_\Om$ when $\la_\Om>0$.

{\bf Notational conventions:} Unless the contrary is explicitly stated, we assume that the considered minimal graphs or mean curvature flows have dimension $n\ge2$. For a vector-valued function $\phi=(\phi^1,\cdots,\phi^m):\Om\to\R^m$, $\phi^\a$ denotes its $\a$-th component, $\phi^\a_i$ denotes $\p_{x_i}\phi^\a$, and $v_\phi$ denotes the slope function of graph$_{\phi}$ defined by
\begin{equation}\aligned\label{DEfvphi}
v_\phi\triangleq\sqrt{\det\left(\de_{ij}+\sum_{\a=1}^m \phi_i^\a \phi_j^\a\right)}.
\endaligned
\end{equation}
For simplicity, we denote $C^k(K,\R^m)$ by $C^k(K)$ for any integer $k\ge0$ and any open (or closed) set $K\subset\R^n$.
The Einstein summation convention over repeated indices will be used.  Greek indices $\a,\be$ take their values in the set $\{1,\cdots,m\}$.

\section{Boundary gradient estimates for the mean curvature flow}

Let $\Om$ be an open set in $\R^n$, and $T$  a positive constant. 
For each $f=(f^1,\cdots,f^m)\in C^2(\Om\times(0,T))$,
let $F_t$ be of the form
$F_t(x_1,\cdots,x_n)=(x_1,\cdots,x_n,f^1(x,t),\cdots,f^m(x,t))$
with $x=(x_1,\cdots,x_n)\in\Om$, $t\in(0,T)$ such that the graph$_{f(\cdot,t)}=\{(x,f(x,t))|\, x\in\Om\}\subset\R^{n+m}$ moves along mean curvature flow, i.e.,
$$\f{d F_t}{dt}=H_t(x),$$
where $H_t$ denotes the mean curvature of the graph$_f$.
Then $f=(f^1,\cdots,f^m)$ satisfies the parabolic equations
\begin{equation}\aligned
\f{\p f^\a}{\p t}=\f{d f^\a}{dt}-\f{\p f^\a}{\p x_j}\f{\p x_j}{\p t}=\f1{\sqrt{\det{g_{kl}}}}\p_i\left(g^{ij}\sqrt{\det{g_{kl}}}f^\a_j\right)-\f{f^\a_j}{\sqrt{\det{g_{kl}}}}\p_i\left(g^{ij}\sqrt{\det{g_{kl}}}\right)=g^{ij}f^\a_{ij}
\endaligned
\end{equation}
on $\Om\times(0,T)$,
where $g_{ij}=\de_{ij}+\sum_\a f^\a_if^\a_j$, and $(g^{ij})$ is the inverse matrix of $(g_{ij})$.

In this section, we will study boundary gradient estimates for the mean curvature flow.
\begin{lemma}\label{BGEMCF}
Let $\Om$ be a bounded domain in $\R^n$ with $\p\Om\in C^2$ and diameter $l$, and $\psi=(\psi^1,\cdots,\psi^m)\in C^2(\overline{\Om})$. Suppose there is a solution $(f^1,\cdots,f^m)\in C^\infty(\Om\times(0,T])\cap C^0(\overline{\Om}\times[0,T])$ to the flow
\begin{equation}\label{MCF3}
\left\{\begin{split}
\f{\p f^\a}{dt}=&g^{ij}f^\a_{ij}\qquad \mathrm{in}\ &\Om\times(0,T]\\
f^\a(\cdot,0)=&\psi^\a \qquad\ \ \mathrm{on}\ &\Om\times\{0\}\\
f^\a(\cdot,t)=&\psi^\a\qquad \mathrm{on}\ &\p\Om\times[0,T]\\
\end{split}\right.\qquad\qquad \mathrm{for}\ \a=1,\cdots,m,
\end{equation}
where $(g^{ij})$ is the inverse matrix of $g_{ij}=\de_{ij}+\sum_\a f^\a_if^\a_j$. Then the following boundary gradient estimate holds:
\begin{equation}\aligned\label{Dfaf1T}
\sup_{\p\Om\times[0,T]}|Df^\a|\le nl\left(1+\la_{f}^2\right)\ e^{1+\f{(n-1)l}{r_{_\Om}}\left(1+\la_{f}^2\right)}\sup_{\Om}|D^2\psi^\a|+\sup_{\Om}|D\psi^\a|,
\endaligned
\end{equation}
where $\la_{f}^2$ is the supremum of the largest eigenvalue of $Df(Df)^T$ on $\Om\times(0,T)$.
\end{lemma}
\begin{proof}
We consider a point $p\in\p\Om$; without loss of generality (after a translation), we can assume $p=0$ and $\overline{B_{r_{_\Om}}}(-r_{_\Om} E_n)\cap\overline{\Om}=0$, where $E_n=(0,0,\cdots,1)\in\R^n$ and $r_{_\Om}E_n=(0,0,\cdots,0,r_{_\Om})$.
We define a function
$$S_\pm^\a(x,t)=\f{\Th_\a}{\th}\left(1-e^{-\th \r(x)}\right)\pm\left(f^\a(x,t)-\psi^\a(x)\right)$$
on $\overline{\Om}\times[0,T]$, where $\r(x)=|x+r_{_\Om}E_n|-r_{_\Om}$, and $\th,\Th_\a$ are positive constants to be defined later.
Obviously, $\r>0$ on $\Om$. Put $y_i=x_i$ for $i=1,\cdots,n-1$ and
$y_n=x_n+r_{_\Om}E_n$. Note that every eigenvalue of $(g^{ij})$ is
between $\left(1+\la_{f}^2\right)^{-1}$ and 1. Since the matrix
  $(\de_{ij}-y_iy_j|x+r_{_\Om}E_n|^{-2})$ has eigenvalues 0 and 1 (of
  multiplicity $(n-1)$), there exist an orthonormal $(n\times n)$-matrix $P=(p_{ij})$ and a diagonal $(n\times n)$-matrix $\La=(\La_{ij})=\mathrm{diag}\{0,1,\cdots,1\}$ so that 
$\de_{ij}-y_iy_j|x+r_{_\Om}E_n|^{-2}=P_{ik}\La_{kl}P_{jl}$. Clearly, the
matrix $(P_{ik}g^{ij}P_{jl})_{i,j=1,\cdots,n}$ is positive definite with
eigenvalues $\le1$, which implies that each element of the matrix $(P_{ik}g^{ij}P_{jl})_{i,j=1,\cdots,n}$ $\le1$. Then 
\begin{equation}\aligned\label{gijdeijyiyj}
g^{ij}\left(\de_{ij}-\f{y_iy_j}{|x+r_{_\Om}E_n|^2}\right)=g^{ij}P_{ik}\La_{kl}P_{jl}
\le n-1.
\endaligned
\end{equation}
For any $\a\in\{1,\cdots,m\}$, with \eqref{gijdeijyiyj} we have
\begin{equation}\aligned
\f{\p S_\pm^\a}{\p t}-g^{ij}\p_{ij}S_\pm^\a=&-g^{ij}\Th_\a e^{-\th \r}\left(\f{\de_{ij}}{|x+r_{_\Om}E_n|}-\f{y_iy_j}{|x+r_{_\Om}E_n|^3}-\f{y_iy_j\th}{|x+r_{_\Om}E_n|^2}\right)
\pm g^{ij}\p_{ij}\psi^\a\\
\ge&\Th_\a e^{-\th \r}\left(\f{\th}{1+\la_{f}^2}-\f{n-1}{\r+r_{_\Om}}\right)-n\sup_{\Om}|D^2\psi^\a|
\endaligned
\end{equation}
on $\Om\times(0,T]$.
Let $l$ be the diameter of $\Om$ and $\th=\f1l+\f{n-1}{r_{_\Om}}\left(1+\la_{f}^2\right)$. If we set
$$\Th_\a=nl\left(1+\la_{f}^2\right)\ e^{1+\f{(n-1)l}{r_{_\Om}}\left(1+\la_{f}^2\right)}\sup_\Om|D^2\psi^\a|,$$
then
\begin{equation}\aligned
\f{\p S_\pm^\a}{\p t}-g^{ij}\p_{ij}S_\pm^\a>0.
\endaligned
\end{equation}
By the maximum principle, it is clear that $S_\pm^\a>0$ on $\Om\times[0,T]$.
Namely,
\begin{equation}\aligned
\left|f^\a(x,t)-\psi^\a(x)\right|<\f{\Th_\a}{\th}\left(1-e^{-\th \r(x)}\right) \qquad \mathrm{for\ any}\ (x,t)\in\Om\times[0,T].
\endaligned
\end{equation}
Hence at the point $p=0$, it follows that
\begin{equation}\aligned
\left|Df^\a(p,t)-D\psi^\a(p)\right|\le\Th_\a \qquad \mathrm{for\ any}\ t\in[0,T].
\endaligned
\end{equation}
As $p$ is an arbitrary point in $\p\Om$,  the proof is complete.
\end{proof}
\textbf{Remark.} We do not use the structure of $g^{ij}$ in the above proof. Though the general boundary gradient estimates are well-known, such as in \cite{Li} for parabolic equations or in \cite{GT} for elliptic equations, we use a different auxiliary function here. In the case of a convex $\Om$, we see that our estimate \eqref{Dfaf1T} is stronger than Theorem 3.1 in \cite{W} if we let $r_{_\Om}\rightarrow\infty$ in \eqref{Dfaf1T}.

Comparing Lemma \ref{BGEMCF}, we have a boundary gradient estimate for the mean curvature flow depending on $\la_\Om$, but independent of $r_{_\Om}$.
\begin{lemma}\label{PDfa}
Let $\Om$ be a bounded domain in $\R^n$ with $\p\Om\in C^2$ and diameter $l$, and $\psi=(\psi^1,\cdots,\psi^m)\in C^2(\overline{\Om})$.
Let $f=(f^1,\cdots,f^m)$ be a smooth solution to \eqref{MCF3} on $\Om\times(0,T]$ with $f\in C^0(\overline{\Om}\times[0,T])$ and boundary data $\psi$. Then
\begin{equation}\aligned
\sup_{\p\Om\times[0,T]}|Df^\a|\le e^{1+2(n-1)\la_\Om l\left(1+\la_{f}^2\right)}\left(nl\left(1+\la_{f}^2\right)\sup_\Om|D^2\psi^\a|+3\sup_\Om|D\psi^\a|\right),
\endaligned
\end{equation}
where $\la_{f}^2$ is the supremum of the largest eigenvalue of $Df(Df)^T$ on $\Om\times(0,T)$.
\end{lemma}
\begin{proof}
We consider a point $p\in\p\Om$; without loss of generality (after a translation), we can assume $p=0$ and $E_n=(0,0,\cdots,1)\in\R^n$ is the unit normal vector (pointing into $\Om$) to $\p\Om$ at $p=0$. Let $\r(x)=\left|x+\f1{2\la_\Om}E_n\right|-\f1{2\la_\Om}$ for each $x\in\R^n$, and
$\Om_*$ be a connected component in $\left\{x\in\Om\big|\, 0<\r(x)<\f1{2\la_\Om}\right\}$ with $0\in\p\Om_*$.
Then $\Om_*$ is a connected open set in $\left\{\left|x+\f1{2\la_\Om}E_n\right|<\f1{\la_\Om}\right\}$.
By the definition of $\la_\Om$, $\p\Om_*\cap\{\r(x)=0\}=\{0\}$.

We define a function
$$S_\pm^\a(x,t)=\f{\Th_\a}{\th}\left(1-e^{-\th \r(x)}\right)\pm\left(f^\a(x,t)-\psi^\a(x)\right)$$
on $\overline{\Om}\times[0,T]$, where $\th=\f1l+2(n-1)\la_\Om\left(1+\la_{f}^2\right)$ and
$$\Th_\a=e^{1+2(n-1)\la_\Om l\left(1+\la_{f}^2\right)}\left(nl\left(1+\la_{f}^2\right)\sup_\Om|D^2\psi^\a|+2\sup_\Om|D\psi^\a|\right).$$
Then at any point $x$ with $\r(x)=\f1{2\la_\Om}$, we get
\begin{equation}\aligned
\f{\Th_\a}{\th}\left(1-e^{-\th \r(x)}\right)\ge2l\sup_\Om|D\psi^\a|\left(1-e^{-(n-1)}\right)\ge l\sup_\Om|D\psi^\a|,
\endaligned
\end{equation}
which implies $S_-^\a(x,t)\ge\sup_\Om|\psi^\a|$, and $S_+^\a(x,t)\le-\sup_\Om|\psi^\a|$.
From the calculation in the proof of Lemma \ref{BGEMCF}, one has
\begin{equation}\aligned
\f{\p S_\pm^\a}{\p t}-g^{ij}\p_{ij}S_\pm^\a>0\qquad \mathrm{on}\ \Om\times(0,T].
\endaligned
\end{equation}
By the maximum principle, we complete the proof.
\end{proof}

Now we assume that the diameter of $\Om$ satisfies $l=1$, and $\p\Om$ has nonnegative mean curvature pointing into $\Om$ in the rest of this section.
From Lemma 14.17 in \cite{GT}, $\p\Om_t$ also has nonnegative mean curvature.
Let $\phi$ be a $C^2$-function on $[0,\infty)$ with
$$\phi'\ge0\qquad \mathrm{and} \qquad\phi''\le0\qquad\quad \mathrm{on}\ [0,\infty).$$
Let $\varphi\in C^2(\overline{\Om})$ and $\tilde{\phi}=\phi\circ d+\varphi$, where $d(x)= d(x,\p\Om)$
for all $x\in\overline{\Om}$. For each $w^\a\in C^1(\Om)$ with $\a=1,\cdots,m-1$,
we define
$$a_{ij}\triangleq\de_{ij}+\sum_{\a=1}^{m-1}w^\a_iw^\a_j+\tilde{\phi}_i\tilde{\phi}_j$$
for $1\le i,j\le n$, and let $(a^{ij})$ be the inverse matrix of $(a_{ij})$.
Assume that the matrix $(D\tilde{\phi},Dw^1,\cdots,Dw^{m-1})$ has singular values $\mu_1,\cdots,\mu_n$ with
\begin{equation}\aligned\label{muimujle1}
|\mu_i\mu_j|\le1\qquad\ \mathrm{for\ any}\ i\neq j.
\endaligned
\end{equation}

\begin{lemma}
Suppose that $\phi$ is the function defined as above, and \eqref{muimujle1} holds. Then
\begin{equation}\aligned\label{aijphiij}
a^{ij}\tilde{\phi}_{ij}\le(n-2)\phi'\la_\Om\left(\f{2}{(\phi')^2}\left(|D\varphi|^2+\f{n-1}{1+\mu_1^2}\right)+\f{1}{1+\mu_1^2}\right)+\f{\phi''}{1+\mu_1^2}+n|D^2\varphi|.
\endaligned
\end{equation}
at all differentiable points of $d$ on $\Om$.
\end{lemma}
\begin{proof}
At any fixed point $p\in\Om$ at which $d$ is differentiable, 
we choose a coordinate system such that
$$a_{ij}=\de_{ij}(1+\mu_i^2),$$
with $\mu_1^2\ge\mu_2^2\ge\cdots\ge\mu_n^2$.
From $a_{ii}=1+\sum_{\a=1}^{m-1}w^\a_iw^\a_i+\tilde{\phi}_i\tilde{\phi}_i=1+\mu_i^2$, it follows that
\begin{equation}\aligned\label{mui2phii2}
\mu_i^2\ge|\tilde{\phi}_i|^2.
\endaligned
\end{equation}
Combining \eqref{muimujle1} and \eqref{mui2phii2}, one has
\begin{equation}\aligned
\sum_{i=2}^n|\tilde{\phi}_i|^2\le\sum_{i=2}^n\mu_i^2\le\f{n-1}{\mu_1^2}.
\endaligned
\end{equation}
With the Cauchy-Schwarz inequality we get
\begin{equation}\aligned
(\phi')^2\sum_{i=2}^nd_i^2\le-\sum_{i=2}^n\left(2\phi'd_i\varphi_i+\varphi_i^2\right)+\f{n-1}{\mu_1^2}\le\f12(\phi')^2\sum_{i=2}^nd_i^2+\sum_{i=2}^n\varphi_i^2+\f{n-1}{\mu_1^2}.
\endaligned
\end{equation}
where $d_i=\f{\p d}{\p x_i}$. Thus
\begin{equation}\aligned\label{1-d12}
1-d_1^2=\sum_{i=2}^nd_i^2\le\f2{(\phi')^2}\left(|D\varphi|^2+\f{n-1}{\mu_1^2}\right).
\endaligned
\end{equation}

Recall $d(x)=d(x,\p\Om)$.
In a neighborhood of the point $p$, we choose an orthonormal basis $\{\p_\r\}\cup\{e_i\}_{i=1,\cdots,n-1}$, such that $\p_\r d=1$, $\{e_i\}_{i=1,\cdots,n-1}$ is normal at $p$, and
\begin{equation}\aligned
\f{\p}{\p x_1}=d_1\p_\r+\sqrt{1-d_1^2}e_1.
\endaligned
\end{equation}
Here, 'normal' means $(D_{e_i}e_i)^T=0$ at $p$, where $(\cdot)^T$ denotes the projection onto the tangent bundle of $\p\Om_{d(p)}$.
Since the function $d$ is a constant on $\p\Om_{d(p)}$, then we get
\begin{equation}\aligned\label{Deiei0}
\left(D_{e_i}D_{e_i}-(D_{e_i}e_i)^T\right)d=D_{e_i}D_{e_i}d=0
\endaligned
\end{equation}
for each $i=1,\cdots,n-1$ at $p$.
Since $D_{\p_\r}d=1$ and $\left(D_{\p_\r}D_{\p_\r}-D_{\p_\r}\p_\r\right)d=0$ at $p$, combining \eqref{Deiei0} one has
\begin{equation}\aligned\label{Ded11}
d_{11}=\mathrm{Hess}_d\left(\f{\p}{\p x_1},\f{\p}{\p x_1}\right)=(1-d_1^2)\mathrm{Hess}_d\left(e_1,e_1\right)=-(1-d_1^2)\left(D_{e_1}e_1\right)d,
\endaligned
\end{equation}
and
\begin{equation}\aligned
\De d=\sum_{i=1}^{n-1}\left(D_{e_i}D_{e_i}-D_{e_i}e_i\right)d=-\sum_{i=1}^{n-1}\left\lan D_{e_i}e_i,\p_\r\right\ran \p_\r d=-H_{\p\Om_{d(p)}}\le0,
\endaligned
\end{equation}
where $H_{\p\Om_{d(p)}}$ denotes the mean curvature of $\p\Om_{d(p)}=\{x\in\Om|\, d(x)=d(p)\}$. Since $\p\Om_{d(p)}$ is mean convex, then $\De d\le0$ at $p$.
From \eqref{laOm}, it follows that $-\left(D_{e_i}e_i\right)d\le \la_\Om$ for each $i=1,\cdots,n-1$ at $p$. Then for any $t\in[0,1]$
\begin{equation}\aligned\label{11Ded}
-td_{11}+\De d=&t(1-d_1^2)\left(D_{e_1}e_1\right)d+\De d\le t(1-d_1^2)\left(D_{e_1}e_1\right)d+t(1-d_1^2)\De d\\
=&-t(1-d_1^2)\sum_{i=2}^{n-1}\left(D_{e_i}e_i\right)d\le (n-2)t(1-d_1^2)\la_\Om.
\endaligned
\end{equation}

We shall now compute $a^{ij}\tilde{\phi}_{ij}$ at $p$. From $\mu_1^2\ge\mu_2^2\ge\cdots\ge\mu_n^2$, \eqref{laOm}, \eqref{muimujle1}, \eqref{11Ded}, one has
\begin{equation}\aligned\label{paijdij}
a^{ij}d_{ij}=&\sum_{i=1}^n\f{1}{1+\mu_i^2}d_{ii}
=\f{d_{11}}{1+\mu_1^2}-\f{d_{11}}{1+\mu_2^2}+\f{\De d}{1+\mu_2^2}+\sum_{i=3}^n\left(\f{1}{1+\mu_i^2}-\f{1}{1+\mu_2^2}\right)d_{ii}\\
\le&\f{1}{1+\mu_2^2}\left(-\f{\mu_1^2-\mu_2^2}{1+\mu_1^2}d_{11}+\De d\right)+\sum_{i=3}^n\left(1-\f{1}{1+\mu_2^2}\right)\la_\Om\\
\le&\f{(n-2)\mu_1^2}{1+\mu_1^2}(1-d_1^2)\la_\Om+\f{n-2}{1+\mu_1^2}\la_\Om.
\endaligned
\end{equation}
Noting $\phi''\le0$ and the definition of $|D^2\varphi|$,  we have
\begin{equation}\aligned\label{aijpdidj}
a^{ij}\left(\phi''d_id_j+\varphi_{ij}\right)\le&\phi''\sum_{i=1}^n\f{d_i^2}{1+\mu_i^2}+n|D^2\varphi|\le\phi''\f{\sum_{i=1}^nd_i^2}{1+\mu_1^2}+n|D^2\varphi|=\f{\phi''}{1+\mu_1^2}+n|D^2\varphi|.
\endaligned
\end{equation}
Combining \eqref{1-d12}\eqref{paijdij} and \eqref{aijpdidj}, we obtain
\begin{equation}\aligned
a^{ij}\tilde{\phi}_{ij}=&a^{ij}\left(\phi'd_{ij}+\phi''d_id_j+\varphi_{ij}\right)\\
\le&(n-2)\phi'\la_\Om\left(\f{2}{(\phi')^2}\left(|D\varphi|^2+\f{n-1}{1+\mu_1^2}\right)+\f{1}{1+\mu_1^2}\right)+\f{\phi''}{1+\mu_1^2}+n|D^2\varphi|.
\endaligned
\end{equation}
This completes the proof.
\end{proof}

Denote $|D\varphi|_\Om\triangleq\sup_{x\in\Om}|D\varphi|$ and $|D^2\varphi|_\Om\triangleq\sup_{x\in\Om}|D^2\varphi|$.
Let us deduce another boundary gradient estimate using the structure of $g^{ij}$.
\begin{theorem}\label{DfMCF}
Let $\Om$ be a mean convex bounded domain in $\R^n$ with diameter $l=1$ and $\p\Om\in C^2$.
Let $\psi=(\psi^1,\cdots,\psi^m)\in C^2(\overline{\Om})$ and $f=(f^1,\cdots,f^m)$ be a smooth solution to \eqref{MCF3} on $\Om\times(0,T]$ with $f\in C^0(\overline{\Om}\times[0,T])$ and boundary data $\psi$. Denote $\varphi=\psi^m$ and $\nu=16n(|D^2\varphi|_\Om+1)$.
Let $\k$ be the constant defined by
\begin{equation}\aligned\label{defd0}
\k=\max\left\{64(n-2)\la_\Om(1+|D\varphi|_\Om^2)e^{3|D\varphi|_\Om\nu},2\nu(\sqrt{n}+|D\varphi|_\Om)e^{|D\varphi|_\Om\nu}\right\}.
\endaligned
\end{equation}
If $\sup_{\Om\times(0,T)}\big|\bigwedge^2df\big|\le1$, and $|Df^\a|\le\f{1}{m-1}$ on $\Om_{1/\k}\times[0,T]$ for $\a=1,\cdots,m-1$, and
$$\sup_{\Om\times(0,T)}\mathrm{det}g_{ij}\le\f{2\k^2}{\nu^2},$$
then we have
\begin{equation}\aligned\label{DfmMCF}
\sup_{(x,t)\in\p\Om\times[0,T]}|Df^m(x,t)|\le\f{\k}{\nu}+|D\varphi|_\Om.
\endaligned
\end{equation}
\end{theorem}
\begin{proof}
By the maximum principle for parabolic equations,
\begin{equation}\aligned
\inf_{y\in\Om}\varphi(y)\le f^m(x,t)\le\sup_{y\in\Om}\varphi(y)\qquad \mathrm{for\ all}\ x\in\Om\times[0,T].
\endaligned
\end{equation}
Set
$$\phi(d)=\f1\nu\log\left(1+\k d\right)\qquad \mathrm{on}\ \Om,$$
then
\begin{equation}\aligned
\phi'=\f{\k}{\nu(1+\k d)}>0\qquad \mathrm{and}\qquad \phi''=-\f{\k^2}{\nu(1+\k d)^2}<0.
\endaligned
\end{equation}
Set
$$\tilde{\phi}=\phi\circ d+\varphi\qquad \mathrm{on}\quad \Om.$$
Denote $g_{ij}=\de_{ij}+\sum_{\a=1}^m u^\a_iu^\a_j$.
We claim
\begin{equation}\aligned\label{aijphiij0}
g^{ij}\tilde{\phi}_{ij}=g^{ij}\left(\phi'd_{ij}+\phi''d_id_j+\varphi_{ij}\right)<0
\endaligned
\end{equation}
at each considered point $(q,t)\in\Om\times(0,T]$ with $D\tilde{\phi}(q)=Df^m(q,t)$ and $f^m(q,t)>\tilde{\phi}(q)$.
By the maximum principle for \eqref{aijphiij0} and $\f{\p f^m}{\p t}-g^{ij}f^m_{ij}=0$, we obtain
$$\phi(d(x))+\varphi(x)\ge f^m(x,t)\qquad \mathrm{for}\ \mathrm{any}\ x\in\Om\times[0,T].$$
Analogously to the above argument, one has
\begin{equation}\aligned\nonumber
\phi(d(x))-\varphi(x)\ge -f^m(x,t)\qquad \mathrm{for}\ \mathrm{any}\ x\in\Om\times[0,T].
\endaligned
\end{equation}
Therefore, for any $(x,t)\in\p\Om\times[0,T]$ it follows that
\begin{equation}\aligned
|Df^m(x,t)|\le\sup_{\p\Om}|D(\phi\circ d)|+|D\varphi|_\Om\le\f{\k}{\nu }+|D\varphi|_\Om.
\endaligned
\end{equation}

For completing the proof of this theorem, we only need to show the claim \eqref{aijphiij0} at the point $q$ with $D\tilde{\phi}(q)=Df^m(q,t)$ and $f^m(q,t)>\tilde{\phi}(q)$.
Denote
$$d_0=\f1\k e^{|D\varphi|_\Om\nu}.$$
From $\phi(d_0)>|D\varphi|_\Om\ge\sup_{x\in\Om}\varphi(x)-\inf_{x\in\Om}\varphi(x)$,
there holds $q\in\Om\setminus\overline{\Om_{d_0}}$.
Let $\la_1,\cdots,\la_n$ be the singular values of $f^\a_i$ at $(q,t)$ such that $\la_1^2\ge\la_2^2\ge\cdots\ge\la_n^2$.
Since $0<d(q)<d_0$, then by the definition of $\k$ we have
$$\phi'=\f{\k}{\nu(1+\k d)}\ge\f1{2d_0\nu}\ge \sqrt{n}+|D\varphi|_\Om.$$
Combining the definition of $\k, d_0$ and $1+\la_1^2\le \mathrm{det}g_{ij}\le\f{2\k^2}{\nu^2}$, first we have
\begin{equation}\aligned\label{3.15}
&(n-2)\la_\Om\left(\f{2}{(\phi')^2}\left(|D\varphi|^2+\f{n-1}{1+\la_1^2}\right)+\f{1}{1+\la_1^2}\right)-\f{\phi''}{2\phi'}\cdot\f{1}{1+\la_1^2}\\
\le&(n-2)\la_\Om\left(\f{2|D\varphi|^2}{(\phi')^2}+\f{2(n-1)}{n(1+\la_1^2)}+\f{1}{1+\la_1^2}\right)-\f{\k}{2(1+\k d)}\cdot\f{1}{1+\la_1^2}\\
\le&(n-2)\la_\Om\left(8|D\varphi|^2d_0^2\nu^2+\f{3}{1+\la_1^2}\right)-\f{1}{4d_0}\cdot\f{1}{1+\la_1^2}\\
\le&8(n-2)\la_\Om|D\varphi|^2\f{\nu^2}{\k^2}e^{2|D\varphi|_\Om\nu}-\left(\f1{4d_0}-3(n-2)\la_\Om\right)\f{\nu^2}{2\k^2}\\
=&\f{\nu^2}{8\k^2}\left(-\f1{d_0}+64(n-2)\la_\Om|D\varphi|^2e^{2|D\varphi|_\Om\nu}+12(n-2)\la_\Om\right)\le0.
\endaligned
\end{equation}
Next, let us estimate the remaining terms in \eqref{aijphiij}.
\begin{itemize}
  \item Case 1: $q\in\Om\setminus\Om_{1/\k}$.
Since $\k d(q)\le1$ and $1+\la_1^2\le\f{2\k^2}{\nu^2}$, it follows that
\begin{equation}\aligned\label{3.16}
\f12\f{\phi''}{1+\la_1^2}+n|D^2\varphi|_\Om\le-\f{\k^2}{2\nu(1+\k d)^2}\cdot\f{\nu^2}{2\k^2}+|D^2\varphi|_\Om\le-\f{\nu}{16}+n|D^2\varphi|_\Om\le0;
\endaligned
\end{equation}
  \item Case 2: $q\in\Om_{1/\k}$.
From the assumption in this theorem, one has
\begin{equation}\aligned
1+\la_1^2\le& 1+\left(\sum_{\a=1}^m |Df^\a|\right)^2\le1+\left(|Df^m|+(m-1)\f{1}{m-1}\right)^2\\
\le&1+\left(|D\tilde{\phi}|+1\right)^2\le1+\left(\phi'+|D\varphi|+1\right)^2<5(\phi')^2.
\endaligned
\end{equation}
Then
\begin{equation}\aligned\label{3.16'}
\f12\f{\phi''}{1+\la_1^2}+n|D^2\varphi|_\Om<-\f{\phi''}{10(\phi')^2}+n|D^2\varphi|_\Om=-\f{\nu}{10}+n|D^2\varphi|_\Om\le0.
\endaligned
\end{equation}
\end{itemize}

Combining Lemma \ref{aijphiij} and \eqref{3.15}\eqref{3.16}\eqref{3.16'},
the claim \eqref{aijphiij0} is true. We complete the proof.
\end{proof}

\section{The Dirichlet problem on mean convex domains}

Let $\la_1,\la_2,\cdots,\la_{n}$ be the singular values of a matrix $\{u^\a_i\}_{1\le i\le n,\ 1\le \a\le m}$ ($\la_j=0$ if $\min\{m,n\}<j\le n$).
Let us first prove an algebraic lemma that will be needed in the sequel.
\begin{lemma}\label{Du1a}
If $|Du^1|^2\cdot\sum_{\a=2}^m|Du^\a|^2\le K^2$ for some constant $K>0$, and $\sum_{\a=2}^m|Du^\a|^2\le K$, then $|\la_i\la_j|\le 2K$ for all $i\neq j$.
\end{lemma}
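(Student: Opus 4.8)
The plan is to reduce everything to the classical min--max (Courant--Fischer) characterization of singular values together with the Frobenius-norm identity $\sum_{j=1}^{n}\lambda_j^{2}=\sum_{\alpha=1}^{m}|Du^{\alpha}|^{2}$. First order the singular values $\lambda_1\ge\lambda_2\ge\cdots\ge\lambda_n\ge0$; since $\lambda_i\lambda_j\le\lambda_1\lambda_2$ whenever $i\ne j$ and all $\lambda_k\ge0$, it suffices to bound $\lambda_1\lambda_2$.

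Set $t=\sum_{\alpha=2}^{m}|Du^{\alpha}|^{2}$, so that the hypotheses read $|Du^{1}|^{2}\,t\le K^{2}$ and $t\le K$. For $\lambda_1$ I would use only the crude bound $\lambda_1^{2}\le\sum_{j}\lambda_j^{2}=|Du^{1}|^{2}+t$. The single step requiring a moment's thought is the estimate $\lambda_2^{2}\le t$. To obtain it, regard $A=(u^{\alpha}_{i})$ as the linear map $\R^{m}\to\R^{n}$ whose $\alpha$-th column is $v_\alpha=Du^{\alpha}$, and apply Courant--Fischer, $\lambda_2=\min_{\operatorname{codim} W=1}\ \max_{0\ne x\in W}|Ax|/|x|$, to the coordinate hyperplane $W=\{x\in\R^{m}:x_1=0\}$: for $x\in W$ one has $Ax=\sum_{\alpha\ge2}x_\alpha v_\alpha$, so Cauchy--Schwarz gives $|Ax|\le|x|\,\bigl(\sum_{\alpha\ge2}|v_\alpha|^{2}\bigr)^{1/2}=|x|\,t^{1/2}$. (Equivalently, apply Weyl's inequality to $AA^{T}=v_1v_1^{T}+\sum_{\alpha\ge2}v_\alpha v_\alpha^{T}$, whose second eigenvalue is at most the trace of $\sum_{\alpha\ge2}v_\alpha v_\alpha^{T}$, namely $t$; or note that $\lambda_2$ is the best rank-one approximation error for $A$ and drop the column $Du^{1}$.)

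Multiplying the two bounds, $\lambda_1^{2}\lambda_2^{2}\le(|Du^{1}|^{2}+t)\,t=|Du^{1}|^{2}t+t^{2}\le K^{2}+K^{2}=2K^{2}$, hence $|\lambda_i\lambda_j|\le\lambda_1\lambda_2\le\sqrt2\,K\le2K$ for all $i\ne j$, which is the assertion (with a little room to spare). The argument is elementary linear algebra; the only genuine point is recognizing that controlling the single vector $Du^{1}$ already controls the second singular value, which is precisely what Courant--Fischer (or Weyl) supplies, and no attempt is made to optimize the constant $2K$.
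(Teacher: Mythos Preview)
Your proof is correct and in fact cleaner than the paper's. The paper normalizes to $K=1$, rotates coordinates so that $D_1u^1=|Du^1|=t$, splits into the cases $t>1$ and $t\le1$, and in the main case carries out a direct analysis of the eigenvectors of the symmetric matrix $\bigl(\sum_\alpha u^\alpha_i u^\alpha_j\bigr)$: it bounds the first component of the top eigenvector from below, hence the first component of the second eigenvector from above, and from this deduces $\lambda_2^2\le 2/t^2$ and finally $\lambda_1^2\lambda_2^2\le(t^2+t^{-2})\cdot 2t^{-2}\le4$.

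Your route bypasses all of this eigenvector bookkeeping by invoking Courant--Fischer (equivalently Weyl's interlacing for $AA^T=v_1v_1^T+\sum_{\alpha\ge2}v_\alpha v_\alpha^T$) to get the single clean inequality $\lambda_2^2\le\sum_{\alpha\ge2}|Du^\alpha|^2$ in one stroke, and then combines it with the Frobenius bound $\lambda_1^2\le|Du^1|^2+\sum_{\alpha\ge2}|Du^\alpha|^2$. This is genuinely simpler and avoids the case distinction; as a bonus it produces the sharper constant $\lambda_1\lambda_2\le\sqrt{2}\,K$ rather than $2K$. The paper's approach has the minor advantage of being completely self-contained (no named inequalities invoked), but your argument is both shorter and stronger.
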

\begin{proof}
By scaling, we only need prove the lemma for $K=1$. For any considered point $x$, we choose an orthonormal coordinate system in its neighborhood such that $D_1u^1=|Du^1|$ at $x$. Now we assume that $D_1u^1=t$ for some constant $t\ge1$. Then by the assumption of the lemma, it follows that
$$\sum_{\a=2}^m|Du^\a|^2\le\f1{t^2}.$$
For any $\xi=(\xi_1,\cdots,\xi_n)\in\S^n$, we have
\begin{equation}\aligned\label{4.1}
\sum_{\a,i,j}u^\a_iu^\a_j\xi_i\xi_j=u^1_1u^1_1\xi_1\xi_1+\sum_{\a\ge2,i,j}u^\a_iu^\a_j\xi_i\xi_j\le t^2\xi_1^2+\f{1}{t^2}.
\endaligned
\end{equation}
By a rearrangement, we can assume that $\la_1^2$ is the maximal eigenvalue of the matrix $\left(\sum_\a u^\a_iu^\a_j\right)_{n\times n}$ with the corresponding eigenfunction $\xi=(\xi_1,\cdots,\xi_n)\in\S^n$. Then from \eqref{4.1} it follows that
\begin{equation}\aligned\nonumber
t^2\le\la_1^2\le t^2\xi_1^2+\f{1}{t^2},
\endaligned
\end{equation}
which implies
$$\xi_1^2\ge1-\f{1}{t^4}.$$
For any $\e=(\e_1,\cdots,\e_n)\in\S^n$ with $\xi\bot\e=0$, one has
\begin{equation}\aligned
\xi_1^2\e_1^2=\left(\sum_{i\ge2}\xi_i\e_i\right)^2\le\sum_{i\ge2}\xi_i^2\sum_{i\ge2}\e_i^2=(1-\xi_1^2)(1-\e_1^2),
\endaligned
\end{equation}
which implies
\begin{equation}\aligned
\e_1^2\le1-\xi_1^2\le\f{1}{t^4}.
\endaligned
\end{equation}
If $\e=(\e_1,\cdots,\e_n)\in\S^n$ is the eigenfunction of $\left(\sum_\a u^\a_iu^\a_j\right)_{n\times n}$ with respect to the second eigenvalue $\la_2^2$, then combining $D_1u^1=|Du^1|=t$ and $\sum_{\a=2}^m|Du^\a|^2\le\f1{t^2}$ one has
\begin{equation}\aligned
\la_2^2=\sum_{\a,i,j}u^\a_iu^\a_j\e_i\e_j= t^2\e_1^2+\sum_{\a\ge2,i,j}u^\a_iu^\a_j\e_i\e_j\le t^2\e_1^2+\f{1}{t^2}\le\f{2}{t^2}.
\endaligned
\end{equation}
So we obtain
\begin{equation}\aligned
\la_1^2\la_2^2\le\left(t^2+\f{1}{t^2}\right)\f{2}{t^2}=2+\f{2}{t^4}\le4.
\endaligned
\end{equation}

If $D_1u^1\le1$, then $\la_1^2\le 2$ clearly. In all, we always have
$$|\la_1\la_2|\le 2.$$
Hence, this completes the proof of Lemma \ref{Du1a}.
\end{proof}

Now let us deduce an interior gradient estimate for the mean curvature flow.
\begin{lemma}\label{KDfbound}
Let $\Om$ be a bounded domain in $\R^n$ with $\p\Om\in C^2$ and diameter $l=1$.
Let $\psi=(\psi^1,\cdots,\psi^m)\in C^2(\overline{\Om})$ and $f=(f^1,\cdots,f^m)$ be a smooth solution to \eqref{MCF3} on $\Om\times(0,T]$ with $f\in C^0(\overline{\Om}\times[0,T])$ and boundary data $\psi$ such that $\big|\La^2df\big|\le1-\ep$ on $\Om\times[0,T]$ for some constant $\ep\in(0,1)$.
Let $\k$ be the constant in Theorem \ref{DfMCF}.
If $\sup_{\Om\times[0,T]}|Df|\le\La$ for some constant $\La>0$, then for any $0<s\le1/\k$ there exists a constant $C_{s,\ep,\La,\psi}$ depending only on $n,m,s,\ep,\La$,  $|D\psi|_{\Om}$ and $|D^2\psi|_{\Om}$ (but independent of $T$) such that
\begin{equation}\aligned\label{DfaC0***}
|Df^\a|_{\Om_s\times[0,T]}\le \f1m+C_{s,\ep,\La,\psi}|D\psi^\a|_{\Om}\qquad \mathrm{for\ every}\ \a=1,\cdots,m.
\endaligned
\end{equation}
\end{lemma}
\begin{proof}
For a point $\mathbf{x}=(x,t)\in\R^n\times\R=\R^{n+1}$, we set $|\mathbf{x}|=\max\{|x|,|t|^{1/2}\}$ and the cylinder
$$Q_R(\mathbf{x})=\left\{\mathbf{y}=(y,\tau)\in\R^{n+1}|\ |\mathbf{x}-\mathbf{y}|<R,\ \tau<t\right\}.$$
From Lemma 3.1 and Lemma 3.2 in \cite{DJX}, there is a general constant $C_{\ep,\La}$ depending only on $n,m,\ep,\La$ such that for any $Q_r(\mathbf{x_0})\subset\Om\times(0,T)$ with $\mathbf{x_0}=(x_0,t_0)$
\begin{equation}\aligned\label{c1gtft}
\sup_{Q_r(\mathbf{x_0})}(r|D^2f|+r^2|\p_tDf|)\le C_{\ep,\La},
\endaligned
\end{equation}
and
\begin{equation}\label{Qr2x0}
\sup_{Q_{r/2}(\mathbf{x_0})}|Df-\xi|\le C_{\ep,\La}\left(r^{-1}\sup_{\mathbf{x}\in Q_r(\mathbf{x_0})}|f(\mathbf{x})-\xi\cdot(x-x_0)-\iota|+r\right)
\end{equation}
for any $\xi\in\R^n\times\R^m$ and $\iota\in\R^m$.
Let $s$ be a constant in $(0,1/\k]$ with $\k$ defined in \eqref{defd0}.
By translation, we assume $0\in\Om_s$. For any $R>0$, denote
$$Q'_R=\left\{\mathbf{y}=(y,\tau)\in\R^{n+1}|\ |y|<R,\ 0<\tau<R^2\right\}.$$
For the fixed $\a\in\{1,\cdots,m\}$, we define
$w(\mathbf{x})=f^\a(\mathbf{x})-D\psi^\a\cdot x-\psi^\a(0)$
for all $\mathbf{x}=(x,t)\in\Om_s\times(0,T)$. From Lemma 12.6 in \cite{Li}, for all $0<r<R$ and $Q'_R\subset\Om\times(0,T)$
\begin{equation}\aligned
\sup_{Q'_r}|w|\le C_{\ep,\La}\left(|D^2\psi^\a|_{\Om}+R^{-2}\sup_{Q'_R}|w|\right)r^2.
\endaligned
\end{equation}
Denote $\mathbf{y_0}=(0,r)\in\Om_s\times\R$. Combining \eqref{Qr2x0}, we have
\begin{equation}
\sup_{Q_{r/2}(\mathbf{y_0})}|Df-D\psi(0)|\le C_{\ep,\La}r\left(1+|D^2\psi|_{\Om}+R^{-1}\sup_{Q'_R}|D\psi|+R^{-2}\sup_{Q'_R}|\psi|\right).
\end{equation}
Hence, there is a general constant $C_{s,\ep,\La,\psi}\ge1$ depending only on $n,m,s,\ep,\La$,$|D\psi|_{\Om}$ and $|D^2\psi|_{\Om}$ such that
for any $x\in\Om_{s/2}$ and $\mathbf{y}=(y,\tau)\in\Om_{s/2}\times(0,T)$ we have
\begin{equation}\aligned\label{Dfytx0}
|Df(y,\tau)-Df(x,0)|\le C_{s,\ep,\La,\psi}\max\{|x-y|,\sqrt{\tau}\}.
\endaligned
\end{equation}
Let $\de$ be a positive constant satisfying $C_{s,\ep,\La,\psi}\sqrt{\de}=\f1m$. If $T\le\de$, then \eqref{Dfytx0} implies \eqref{DfaC0***}.

Now we assume $T>\de$.
Let $\mathbf{x}=(x,t), \mathbf{y}=(y,\tau)\in\Om_{s/2}\times(0,T)$, and we denote $\mathbf{y_x}=(y,t)\in\Om_{s/2}\times(0,T)$.
Note $l=1$, and the definition of $\k$ in \eqref{defd0}.
For $|\mathbf{x}-\mathbf{y}|\le\min\{t,\tau\}$, from \eqref{c1gtft} we have
\begin{equation}\aligned\label{Dfxtytau1}
&|Df(\mathbf{x})-Df(\mathbf{y})|\le |Df(\mathbf{x})-Df(\mathbf{y_x})|+|Df(\mathbf{y})-Df(\mathbf{y_x})|\\
\le& \f{C_{s,\ep,\La,\psi}}{\min\{\sqrt{t},\sqrt{\tau}\}}|x-y|+\f{C_{s,\ep,\La,\psi}}{\min\{t,\tau\}}|t-\tau|\le C_{s,\ep,\La,\psi}|\mathbf{x}-\mathbf{y}|^{1/2}.
\endaligned
\end{equation}
For $|\mathbf{x}-\mathbf{y}|>\min\{t,\tau\}$, from \eqref{Dfytx0} we have
\begin{equation}\aligned\label{Dfxtytau2}
&|Df(\mathbf{x})-Df(\mathbf{y})|\le |Df(\mathbf{x})-Df(x,0)|+|Df(\mathbf{y})-Df(x,0)|\\
\le& C_{s,\ep,\La,\psi}\left(\sqrt{t}+\min\{|x-y|,\sqrt{\tau}\}\right)\le C_{s,\ep,\La,\psi}|\mathbf{x}-\mathbf{y}|^{\f12}.
\endaligned
\end{equation}

Let $\e$ be a Lipschitz function with support in $\Om_{s/4}$ such that $\e=1$ on $\Om_{s/2}$, $|D\e|\le\f cs$ and $|D^2\e|\le\f c{s^2}$ for some absolute constant $c\ge1$.
Let $M_t$ be the graph of $f(\cdot,t)$ in $\R^{n+m}$.
We will see $\e$ and $f^\a(\cdot,t)$ as the functions on $M_t$ by identifying $\e(x,f(x,t))=\e(x)$ and $f^\a(x,f(x,t))=f^\a(x,t)$.
Since $t\in(0,t)\mapsto M_t$ is a mean curvature flow, then
\begin{equation}\aligned\label{ftDeMt=0}
\f{d f^\a}{d t}-\De_{M_t} f^\a=0,
\endaligned
\end{equation}
where $\De_{M_t}$ is the Laplacian of $M_t$. For simplicity, let $\na$, $\overline{D}$ denote Levi-Civita connections of $M_t$ and $\R^{n+m}$, respectively.
Let $e_1,\cdots,e_n$ be a local orthonormal tangent frame of $M_t$ at any considered point.
Then from the definition of $\e$ and \eqref{ftDeMt=0},
\begin{equation}\aligned
\left(\f{d}{d t}-\De_{M_t}\right)\e^2=&2\e\overline{D}\e\cdot H_{M_t}-2|\na\e|^2-2\e\De_{M_t}\e\\
=&-2|\na\e|^2-2\e\sum_i\overline{D}^2\e(e_i,e_i)\le\f{2c^2(n+1)}{s^2}.
\endaligned
\end{equation}
So we have
\begin{equation}\aligned\label{fdtfa2e2}
&\left(\f{d}{d t}-\De_{M_t}\right)\left((f^\a)^2\e^2\right)\\
=&(f^\a)^2\left(\f{d}{d t}-\De_{M_t}\right)\e^2+\e^2\left(\f{d}{d t}-\De_{M_t}\right)(f^\a)^2-2\na (f^\a)^2\cdot\na\e^2\\
\le&\f{2c^2(n+1)}{s^2}(f^\a)^2-2\e^2|\na f^\a|^2+\e^2|\na f^\a|^2+16(f^\a)^2|\na\e|^2\\
\le&\f{2c^2(n+9)}{s^2}(f^\a)^2-\e^2|\na f^\a|^2.
\endaligned
\end{equation}
Let $\Phi(X,t)=(-4\pi t)^{-\f n2}e^{|X|^2/t}$ for all $X\in\R^{n+m}$, $t<0$, and $\Phi_{X_0,t_0}(X,t)=\Phi(X-X_0,t-t_0)$ for all $t_0>0$, $X_0\in\R^{n+m}$.
Combining Huisken's monotonicity formula \cite{Hui} (see also (7) in \cite{EH}, or (1.2) in \cite{CM1}) and \eqref{fdtfa2e2}, we have
\begin{equation}\aligned
\f{d}{dt}\int_{M_t}(f^\a)^2\e^2\Phi_{X_0,t_0}\le&\int_{M_t}\left(\f{d}{d t}-\De_{M_t}\right)\left((f^\a)^2\e^2\right)\Phi_{X_0,t_0}\\
\le&\int_{M_t}\left(\f{2c^2(n+9)}{s^2}(f^\a)^2-\e^2|\na f^\a|^2\right)\Phi_{X_0,t_0}.
\endaligned
\end{equation}
Integrating the above inequality on $[t_1,t_2]\subset[0,\min\{t_0,T\}]$ implies
\begin{equation}\aligned
\int_{t_1}^{t_2}\int_{M_t}\e^2|\na f^\a|^2\Phi_{X_0,t_0}\le\int_{M_t}(f^\a)^2\e^2\Phi_{X_0,t_0}\bigg|^{t_1}_{t_2}+\f{2c^2(n+9)}{s^2}\int^{t_2}_{t_1}\int_{M_t}(f^\a)^2\Phi_{X_0,t_0}.
\endaligned
\end{equation}
Note that $\sup_{\Om\times[0,T]}|f^\a|\le\sup_\Om|\psi^\a|$, and $\sup_{\Om\times[0,T]}|Df|\le\La$. Then by choosing suitable $X_0,t_0$, for any $0<s\le1/\k$ there holds
\begin{equation}\aligned\label{t1t2Dfa2}
\int_{t_1}^{t_2}\int_{\Om_{s/2}}|Df^\a|^2\le C_{s,\ep,\La}(t_2-t_1+1)|\psi^\a|^2_{\Om}\qquad \mathrm{for\ any}\ \a=1,\cdots,m,
\endaligned
\end{equation}
where $C_{s,\ep,\La}>0$ is a general constant depending only on $n,m,s,\ep,\La$.
Let $\omega_n$ be the volume of the unit ball in $\R^n$.
For any $(x,t)\in\Om_s\times[\de,T]$ and $0<s_*<\min\{s/2,\sqrt{\de}\}$, from \eqref{Dfxtytau1}\eqref{Dfxtytau2}\eqref{t1t2Dfa2} we get
\begin{equation}\aligned
\omega_ns_*^{n+2}|f_i^\a(x,t)|\le&\int_{t-s_*^2}^t\int_{B_{s_*}(x)}|f_i^\a(y,\tau)-f_i^\a(x,t)|dyd\tau+\int_{t-s_*^2}^t\int_{B_{s_*}(x)}|f_i^\a(y,\tau)|dy\\
\le& \omega_ns_*^{n+2}C_{s,\ep,\La,\psi}s_*^{1/2}+\left(\omega_ns_*^{n+2}\int_{t-s_*^2}^t\int_{B_{s_*}(x)}|f_i^\a|^2\right)^{1/2}\\
\le& \omega_ns_*^{n+\f52}C_{s,\ep,\La,\psi}+\sqrt{C_{s,\ep,\La}\omega_ns_*^{n+2}(s_*^2+1)}|\psi^\a|_{\Om}.
\endaligned
\end{equation}
Let $\sqrt{s_*}=\f1{mC_{s,\ep,\La,\psi}}$, we have
\begin{equation}\aligned
|f_i^\a(x,t)|\le \f1m+C_{s,\ep,\La}s_*^{-\f{n+2}2}|\psi^\a|_{\Om}.
\endaligned
\end{equation}
For any $(x,t)\in\Om_s\times[0,\de]$, using \eqref{Dfytx0} we can also get \eqref{DfaC0***}.
This completes the proof.
\end{proof}

With the mean curvature flow, we can now prove Theorem \ref{main0}.
\begin{proof}
By scaling, we may assume that the diameter $l=1$.
According to Theorem 8.2 in \cite{Li} (see also Lemma 5.1 in \cite{DJX} for instance), there is a constant $T>0$, $\g\in(0,1)$, a solution $(f^1,\cdots,f^m)\in C^\infty(\Om\times(0,T])\cap C^{1,\g}(\overline{\Om}\times[0,T])$ to the mean curvature flow
\begin{equation}\label{MCFmainMCF}
\left\{\begin{split}
\f{\p f^\a}{dt}=&g^{ij}f^\a_{ij}\qquad \mathrm{in}\ &\Om\times(0,T]\\
f^\a(\cdot,0)=&\psi^\a \qquad\ \ \mathrm{on}\ &\Om\times\{0\}\\
f^\a(\cdot,t)=&\psi^\a\qquad \mathrm{on}\ &\p\Om\times[0,T]\\
\end{split}\right.\qquad\qquad \mathrm{for}\ \a=1,\cdots,m,
\end{equation}
such that $|f^\a|_{1+\g;\overline{\Om}\times[0,T]}\le C_{\Om,\psi}$ for each $\a=1,\cdots,m$, where $C_{\Om,\psi}$ is a constant depending only on $m,n,|D\psi|_\Om,|D^2\psi|_{\Om}$ and the curvature of $\p\Om$.
Here, $|\cdot|_{1+\g;\cdot}$ denotes the (higher order) H\"older norm in the parabolic case (see chapter IV in \cite{Li} or \cite{DJX}).
Let $\la_1,\cdots,\la_n$ be the singular values of the matrix $(f^\a_i)$ at each point in $\overline{\Om}\times[0,T]$ with $\la_1\ge\cdots\ge\la_n\ge0$. Denote
\begin{equation}\aligned\label{DEFThf}
\Theta_f=\f{1-\la_1^2}{1+\la_1^2}+\f{1-\la_2^2}{1+\la_2^2}=\f{2(1-\la_1^2\la_2^2)}{(1+\la_1^2)(1+\la_2^2)}\qquad \mathrm{on}\ \overline{\Om}\times[0,T].
\endaligned
\end{equation}
Let $\k,\nu$ be the constants in Theorem \ref{DfMCF}, and $\Psi\triangleq\f{2\k^2}{\nu^2}$.
There is a constant $\ep_{\varphi}>0$ depending on $n,m$, $\la_\Om$, $\sup_{\Om}|D\varphi|$ and $\sup_{\Om}|D^2\varphi|$ such that
if \eqref{000} holds, then $\sup_\Om\Th_f(\cdot,0)>\Psi^{-1}$ from Lemma \ref{Du1a},
and $v_f=\det\left(\de_{ij}+\sum_\a f^\a_if^\a_j\right)<\Psi$ on $\Om$ from Lemma \ref{smallT} in Appendix II.
Let $t_1$ be the maximal time $\le T$ such that $\Th_f\ge\Psi^{-1}$ and $v_f\le\Psi$ on $\overline{\Om}\times[0,t_1]$.
For a suitable constant $\ep_{\varphi}>0$, we can assume that
\begin{equation}\aligned\label{Df2nbdy}
\sup_{0\le t\le t_1}\left(\sup_{\p\Om}\sum_{\a=1}^{m-1}|Df^\a|^2\right)\le\f{\nu^2}{25(m-1)\k^2}
\endaligned
\end{equation}
from Lemma \ref{PDfa}, and
\begin{equation}\aligned\label{Df2nkint}
|Df^\a|\le\f{1}{m-1}\qquad on \ \ \Om_{1/\k}\times[0,t_1]
\endaligned
\end{equation}
for $\a=1,\cdots,m-1$ from Lemma \ref{KDfbound}.

Now we assume that there is a time $t_0\in(0,t_1]$ such that
\begin{itemize}
  \item $\sup_{x\in\Om}\Th_f(x,t_0)=\Psi^{-1}$\  or \ $\sup_{x\in\Om} v^2_f(x,t_0)=\Psi$;
  \item $\Th_f(x,t)>\Psi^{-1}$ and $v^2_f(x,t)<\Psi$ for all $(x,t)\in\overline{\Om}\times[0,t_0)$.
\end{itemize}

Let $E_1,\cdots,E_{n+m}$ be a standard basis of $\R^n\times\R^m$. We can see $v_f(x,t)$ as a function on graph$_{f(\cdot,t)}$ for each $t\in[0,T]$ defined by
$$v_f^{-1}=\left\lan n^1_f\bigwedge\cdots\bigwedge n^m_f, E_{n+1}\bigwedge\cdots\bigwedge E_{n+m}\right\ran,$$
where $n^1_f,\cdots,n^m_f$ are the local orthonormal normal vector fields of graph$_{f(\cdot,t)}$ in $\R^{n+m}$.
Let $\De_f$ and $A_f$ be the Laplacian and the second fundamental form of graph$_{f(\cdot,t)}$ for each $t\in[0,T]$, respectively.
Note that graph$_{f(\cdot,t)}=\{(x,f(x,t))\in\R^n\times\R^m|\ x\in\Om\}$ moves by mean curvature. Then the Gauss map of graph$_{f(\cdot,t)}$ satisfies the harmonic heat flow equation, which is a parabolic version of the Ruh-Vilms theorem. 
(2.8) in \cite{JXY} then yields
\begin{equation}\aligned\label{evologvf}
\left(\f{\p}{\p t}-\De_f\right)v_f=-v_f\left(|A_f|^2+\sum_{i,j}\la_i\la_j\left(h^i_{ik}h^j_{jk}+h^j_{ik}h^i_{jk}\right)\right),
\endaligned
\end{equation}
where $h^\a_{ij}$ are the components of the second fundamental form defined by $h^\a_{ij}=\lan\overline{\na}_{e^f_i}e^f_j,n^\a_f\ran$, $\{e^f_i\}$ is a tangent basis at the considered point in graph$_{f(\cdot,t)}$, and $\overline{\na}$ is the Levi-Civita connection of $\R^{n+m}$.
From $\Th_f\ge\Psi^{-1}$ for each $t\in[0,t_0]$, one have $\la_1\la_2\le1$, and $\Psi^{-1}\le \f{2(1-\la_1^2\la_2^2)}{4\la_1^2\la_2^2}$, which implies
$\la_1\la_2\le\sqrt{\f{\Psi}{\Psi+2}}<\f{\Psi}{\Psi+2}$.
Combining \eqref{evologvf} and $\la_i\la_j<\f{\Psi}{\Psi+2}$ for all $i\neq j$, we obtain
\begin{equation}\aligned\label{Gfparabolic}
\left(\f{\p}{\p t}-\De_f\right)v_f\le-\f{2v_f}{\Psi+2}|A_f|^2\qquad  \mathrm{for\ each}\ t\in[0,t_0].
\endaligned
\end{equation}
From Theorem \ref{DfMCF} and \eqref{Df2nkint} one has
\begin{equation}\aligned\label{pOmt0Dfm}
\sup_{\p\Om\times[0,t_0]}|Df^m|\le\f{\k}{\nu}+|D\varphi|_\Om.
\endaligned
\end{equation}
Since
\begin{equation}\aligned\label{knuDvp}
\f{\k}{\nu}\ge2\left(1+|D\varphi|_\Om\right)e^{16n|D\varphi|_\Om}\ge2\left(1+|D\varphi|_\Om\right)\left(1+16n|D\varphi|_\Om\right)\ge66|D\varphi|_\Om+2,
\endaligned
\end{equation}
then $\f{\k}{\nu}+|D\varphi|_\Om\le\f{67\k}{66\nu}$.
Combining Lemma \ref{Du1a} and \eqref{Df2nbdy}, we obtain $\la_1\la_2\le\f12$ on $\p\Om\times[0,t_0]$ and then
$$\sup_{\p\Om\times[0,t_0]}\Th_f\ge\sup_{\p\Om\times[0,t_0]}2\f{1-\la_1^2\la_2^2}{v_f^2}>\Psi^{-1}.$$
From Lemma 5.3 in \cite{TW}, $\Th_f$ satisfies the maximum principle as $\la_i\la_j<1$ on $\overline{\Om}\times[0,t_0]$ for all $i\neq j$, which implies $\sup_\Om\Th_f(\cdot,t_0)>\Psi^{-1}$.
Combining \eqref{pOmt0Dfm}\eqref{knuDvp} and Lemma \ref{smallT} one has
\begin{equation}\aligned
v^2_f\le&1+\left(\sum_{\a=1}^{m}|Df^\a|\right)^2
\le1+\left(|Df^m|+\sqrt{(m-1)\sum_{\a=1}^{m-1}|Df^\a|^2}\right)^2\\
\le&1+\left(\f{\k}{\nu}+|D\varphi|_\Om+\f{\nu}{5\k}\right)^2\le1+\left(\f{\k}{\nu}+\f{\k}{20\nu}\right)^2<\Psi
\endaligned
\end{equation}
on $\p\Om\times[0,t_0]$. With the maximum principle for \eqref{Gfparabolic}, we have $\sup_{\Om\times[0,t_0]}v^2_f<\Psi$.
Therefore, such $t_0$ does not exist provided \eqref{000} holds for such constant $\ep_{\varphi}>0$.

Note that $|f^\a|_{1+\g;\overline{\Om}\times[0,T]}\le C_{\Om,\psi}$ for each $\a=1,\cdots,m$.
By Theorem 8.2 in \cite{Li} (or Lemma 5.1 in \cite{DJX}), for each $t\in(0,T]$ the flow \eqref{MCFmainMCF} from the time $t$ with boundary data $f(\cdot,t)$ has a short-time existence on $[t,t+t']$ for some $t'>0$ depending only on $m,n,\Psi,|D\psi|_\Om,|D^2\psi|_{\Om}$ and the curvature of $\p\Om$.
Hence, $|f^\a|_{1+\g;\overline{\Om}\times[0,T+t'/2]}<\infty$ for each $\a=1,\cdots,m$.
From Theorem 3.3 in \cite{DJX}, $|f^\a|_{1+\g';\overline{\Om}\times[0,T']}\le C'_{\Om,\psi}$ for each $\a=1,\cdots,m$, where $\g'\in(0,\g)$ and $C'_{\Om,\psi}$ are constants depending only on $m,n,\Psi,|D\psi|_\Om,|D^2\psi|_{\Om}$ and the curvature of $\p\Om$.
The constant $\Psi$ depends only on $m,n,|D\psi|_\Om,|D^2\psi|_{\Om}$ and $\la_\Om$, but is independent of the time.
Hence, the flow has long-time existence and does not have any finite time singularity.
Let $H_{M_t}$ denote the mean curvature of $M_t\triangleq \mathrm{graph}_{f(\cdot,t)}$.
From $\f{\p}{\p t}v_f=-|H_{M_t}|^2v_f$ and $\p M_t=\p\Om\times\{0\}$, one has
\begin{equation}\aligned
\int_0^\infty\left(\int_{M_t}|H_{M_t}|^2\right)dt<\infty.
\endaligned
\end{equation}
From Lemma 3.1 in \cite{DJX}, we obtain the interior curvature estimates for $M_t$ with any $t>0$.
Then one can get the estimates of the higher order derivatives of $f$, which consequently are  uniformly bounded in each compact set of $\Om$.
Combining this with the above uniform boundary estimates of $M_t$, there is a sequence $t_i\rightarrow\infty$ such that $M_{t_i}$ converges to a smooth minimal graph over $\Om$ with the graphic function $u=(u^1,\cdots,u^m)\in C^{1,\g'}(\overline{\Om})$ and  $u=\psi$ on $\p\Om$. It is then standard to obtain  $W^{2,p}$-estimates (see \cite{W} for instance). From these estimates and the Sobolev imbedding theorem, $u\in C^{1,\g_*}(\overline{\Om})$ for any $\g_*\in(0,1)$. This completes the proof.
\end{proof}

\section{The Dirichlet problem on convex domains}
\label{convexdom}
We now turn our attention to the smaller class of domains that are convex, and
not only mean convex. This will allow us to obtain better bounds. In fact,
as explained in the Introduction,
the example of \cite{LO} and the Bernstein result of \cite{JXY} suggest an explicit quantitative bound for the estimates. On domains that are convex, we can indeed obtain a quantitative result in this direction stated in Theorem \ref{mainMCF*}. 
\begin{proof}
According to Theorem 8.2 in \cite{Li} (see also Lemma 5.1 in \cite{DJX} for a version in the present context), there are a constant $T>0$, $\g\in(0,1)$ and
a solution $(f^1,\cdots,f^m)\in C^\infty(\Om\times(0,T])\cap C^{1,\g}(\overline{\Om}\times[0,T])$ to the mean curvature flow
\begin{equation}
\left\{\begin{split}
\f{\p f^\a}{dt}=&g^{ij}f^\a_{ij}\qquad \mathrm{in}\ &\Om\times(0,T]\\
f^\a(\cdot,0)=&\psi^\a \qquad\ \ \mathrm{on}\ &\Om\times\{0\}\\
f^\a(\cdot,t)=&\psi^\a\qquad \mathrm{on}\ &\p\Om\times[0,T]\\
\end{split}\right.\qquad\qquad \mathrm{for}\ \a=1,\cdots,m,
\end{equation}
such that $|f^\a|_{1+\g;\overline{\Om}\times[0,T]}\le C_{\Om,\psi}$ for each $\a=1,\cdots,m$, where $C_{\Om,\psi}$ is a constant depending only on $m,n,|D\psi|_\Om,|D^2\psi|_{\Om}$, diam$(\Om)$ and the curvature of $\p\Om$.

Recall $v^2_f(x,t)=\det\left(\de_{ij}+\sum_\a f^\a_if^\a_j\right)$ for each $(x,t)\in \Om\times[0,T]$. From Lemma \ref{sssss} in Appendix II, $\sup_{x\in\Om}v^2_f(x,0)<b_0$.
We assume that there is a time $t_0\in(0,T]$ such that $\sup_{x\in\Om}v^2_f(x,t)<b_0$ for each $t\in[0,t_0)$ and $\sup_{x\in\Om}v^2_f(x,t_0)=b_0$.
Note that the right hand side of \eqref{evologvf} is the same as the right hand side of (3.7) in \cite{JXY}, and so, the estimates (3.13), (3.16), Lemma 3.1 and Lemma 3.2 in \cite{JXY} apply (those estimates are derived algebraically without using the minimal surface system). Therefore, for $t\in[0,T)$ one has
\begin{equation}\aligned\label{Gfparabolic*}
\left(\f{\p}{\p t}-\De_f\right)v_f\le0,
\endaligned
\end{equation}
where $\De_f$ is the Laplacian of graph$_f$. Since $\Om$ is convex,  we can
allow $r_{_\Om}\rightarrow\infty$ in Lemma \ref{BGEMCF}. From Lemma
\ref{BGEMCF} with \eqref{cond0} and $v^2_{f}\le b_0$ on $\Om\times[0,t_0]$, we obtain
\begin{equation}\aligned
\sum_{\a=1}^{m}\sup_{\p\Om\times[0,t_0]}|Df^\a|<\sqrt{b_0-1}.
\endaligned
\end{equation}
By Lemma \ref{sssss} in Appendix II, one has $\sup_{\p\Om\times[0,t_0]}v^2_{f}<b_0$. From the parabolic maximum principle, it follows that $\sup_{\Om\times[0,t_0]}v^2_{f}<b_0$, which is a contradiction to $\sup_{x\in\Om}v^2_f(x,t_0)=b_0$. Hence $t_0$ does not exist.
The remaining argument is similar to the last part of the proof of Theorem \ref{main0}. Hence there is a sequence $t_i\rightarrow\infty$ such that $\mathrm{graph}_{f(\cdot,t_i)}$ converges to a smooth minimal graph with the graphic function $u=(u^1,\cdots,u^m)$ and boundary data $\psi$ and $u\in C^{1,\g}(\overline{\Om})$ for any $\g\in(0,1)$. It is clear that $\sup_\Om\mathrm{det}\left(\de_{ij}+u^\a_iu^\a_j\right)\le b_0$. Combining \eqref{cond0} and Lemma \ref{PDfa}, it follows that
\begin{equation}\aligned
\sum_{\a=1}^{m}\sup_{\p\Om}|Du^\a|<\sqrt{b_0-1}.
\endaligned
\end{equation}
Let $\De_u$ denote the Laplacian of graph$_u$.
Using Lemma \ref{sssss} and the maximum principle for $\De_u\mathrm{det}\left(\de_{ij}+u^\a_iu^\a_j\right)\ge0$, we obtain $\sup_\Om\mathrm{det}\left(\de_{ij}+u^\a_iu^\a_j\right)<b_0$ and complete the proof.
\end{proof}

For any $C^2$ map $\phi: \S^{n+k}\to \S^n\subset\ir{n+1}, n\ge 4,  k>0$ which
is not homotopic to zero as a map to $\S^n$, Lawson-Osserman \cite{LO}
constructed  boundary data on the unit ball in $\ir{n}$, for which the
corresponding Dirichlet problem has no solution. Their example arises
  from the  map
  \begin{eqnarray*}u: \R^4 &\to& \R^3\\
   (z_1,z_2)\in \C^2 &\mapsto& (|z_1|^2-|z_2|^2,\;  2 z_1\bar z_2)\in \R^3
  \end{eqnarray*}
which is an extension of the classical Hopf map $\S^3\to \S^2$, and they used
it as a counterexample to the Bernstein problem in higher codimension. Since
local regularity results, underlying for instance solutions to Dirichlet problems, 
and global Bernstein theorems are related via scaling
arguments, they could then also use this example to infer non-solvability of
certain Dirichlet problems. For this particular $u$, its slope satisfies
$\sup_{\R^4}\mathrm{det}\left(\de_{ij}+u^\a_iu^\a_j\right)^{1/2}=9$, and it
is a fundamental open question whether the constant 9 occurring here is sharp
for the Bernstein problem. So far, the best value for which a general
Bernstein theorem in higher codimensions could be derived \cite{JXY} is 3
instead of 9. It is currently unclear whether one can go substantially beyond 3, because the
convex geometry of the Gauss regions in Grassmannians on which all such
results depend breaks down  for values $>3$. (Actually, as we can see by a
contradiction argument, there is a constant $t>0$ (depending on the dimension)
so that the Bernstein theorem still holds for the value $3+t$. 
In fact, if not, there is a sequence $t_i$ converging to 0, and a sequence of
nontrivial $n$-minimal graphs $M_i$ with slope $v_i<3+t_i$. We consider a
tangent cone $C_i$ of $M_i$ at infinity. The slope of $C_i$ then is also less
than $3+t_i$. Choosing a subsequence, we may assume that $C_i$ converges to a
stationary varifold $C$ in the varifold sense. It is not hard to see that $C$
has multiplicity one. Moreover, $C$ is a cone, and its slope is $\le 3$.  By
\cite{JXY}, $C$ is flat. Since, however, the $C_i$ are not flat with a
singular point 0 at least, we get a contradiction by Allard's regularity
theorem.) -- Conversely, one may also investigate
the rigidity of the Lawson-Osserman cone, see \cite{JXY2}. Since the Bernstein
problem is structurally more restricted than the Dirichlet or the local
regularity problem, it makes sense to look for the optimal constant there. Because of the relation between local
regularity and global Bernstein alluded to above, we should expect that the
optimal value of the constant $\sqrt{b_0}$ in our Theorem \ref{mainMCF*} and
the optimal value for the Bernstein problem coincide. Thus, our theorem
reaches the best known value and is
quantitatively explicit.

\section{The Dirichlet problem by perturbation}

Let $\Om$ be a bounded domain in $\R^n$ with $\p\Om\in C^2$.
Let $M$ be a smooth minimal graph over $\Om$ in $\R^{n+m}$ with the graphic function $u=(u^1,\cdots,u^m)\in C^\infty(\Om)$.
For any function $\phi=(\phi^1,\cdots,\phi^m)\in C^2(\Om)$, and a constant $\de\in(0,1]$, let $M_s$ denote a graph over $\Om$ with the graphic function $(u^1_s,\cdots,u^m_s)=(u^1+s\phi^1,\cdots,u^m+s\phi^m)$ for each $|s|\le\de$.
Let $g_{ij}^sdx_idx_j$ be the metric of $M_s$ defined by
\begin{equation}\aligned\label{gijs}
g_{ij}^s=\de_{ij}+\sum_\a \p_{x_i}u^\a_s\p_{x_j}u^\a_s.
\endaligned
\end{equation}
Let $(g^{ij}_s)$ be the inverse matrix of $(g^s_{ij})$.
Let $v_{u_s}=\sqrt{\mathrm{det}g_{ij}^s}$, and $\De_{M_s}$ be the Laplacian of $M_s=\mathrm{graph}_{u_s}$.
We omit the index $s$ for $s=0$, and write $\p_i$ instead of $\p_{x_i}$ for convenience.
Let $L$ be the linear differential operator of the second order defined by $L\phi=(L\phi^\a,\cdots,L\phi^m)$ with
\begin{equation}\aligned\label{LphiDes}
L\phi^\a=\f{\p}{\p s}\bigg|_{s=0}\De_{M_s}u_s^\a.
\endaligned
\end{equation}
Note that $\De_Mu^\a=0$.
A direct computation implies
\begin{equation}\aligned\label{LphiDes*}
L\phi^\a=&\f1{v_{u}}\f{\p}{\p s}\bigg|_{s=0}\p_{i}\left(g^{ij}_sv_{u_s}\p_{j}u^\a_s\right)\\
=&\f1{v_{u}}\p_i\left(g^{ij}\phi_j^\a v_u+u_j^\a\left(-2g^{ik}g^{lj}u^\be_{k}\phi^\be_{l}+g^{ij}g^{kl}u^\be_{k}\phi^\be_{l}\right)v_u\right).
\endaligned
\end{equation}
\begin{remark}
If we further assume $u^\a,\phi^\a\in C^1(\overline{\Om})$, $\phi^\a=0$ on $\p\Om$ for all $\a=1,\cdots,m$,
then $-\int_\Om\lan \phi,L\phi\ran v_u$ has the following geometric meaning.
\begin{equation}\aligned
-\int_\Om\lan \phi,L\phi\ran v_u=&\f{\p}{\p s}\bigg|_{s=0}\int_\Om\left\lan \phi^\a,\p_{i}\left(g^{ij}_sv_{u_s}\p_{j}u^\a_s\right)\right\ran\\
=&\f{\p}{\p s}\bigg|_{s=0}\int_\Om g^{ij}_s\phi^\a_i\p_{j}u^\a_sv_{u_s}
=\f{\p^2}{\p s^2}\bigg|_{s=0}\int_\Om v_{u_s}.
\endaligned
\end{equation}
In particular, for the codimension $m=1$,
\begin{equation}\aligned
&-\int_M\lan \phi,L\phi\ran=\int_M\left(|\na_M\phi|^2-|\lan\na_M w,\na_M\phi\ran|^2\right),
\endaligned
\end{equation}
where $\na_M$ is the Levi-Civita connection of $M=\mathrm{graph}_u$.
\end{remark}
It's not hard to see that there is a constant $c_n>0$ depending only on $n$ such that
\begin{equation}\aligned
\left|\f{\p^2}{\p s^2}\De_{M_s}u_s^\a\right|\le c_n(1+|Du|^2+|D\phi|^2)^3\left(|D^2u|+|D^2\phi|\right).
\endaligned
\end{equation}
Moreover, suppose $\hat{\phi}=(\hat{\phi}^1,\cdots,\hat{\phi}^m)$, $\hat{u}_s=(\hat{u}^1_s,\cdots,\hat{u}^m_s)=(u^1+s\hat{\phi}^1,\cdots,u^m+s\hat{\phi}^{m})$ for each $0\le s\le\de$, and $\hat{u}_0=(u^1,\cdots,u^m)$. Denote $\hat{M}_s=\mathrm{graph}_{\hat{u}_s}$. Then with a suitable constant $c_n>0$
\begin{equation}\aligned\label{w-hatw*}
&\left|\f{\p^2}{\p s^2}\left(\De_{M_s}u_s^\a-\De_{\hat{M}_s}\hat{u}_s^\a\right)\right|\le c_n(1+|Du|^2+|D\phi|^2+|D\hat{\phi}|^2)^3|D^2(\phi-\hat{\phi})|\\
&+c_n|D(\phi-\hat{\phi})|(1+|Du|^2+|D\phi|^2+|D\hat{\phi}|^2)^{5/2}\left(|D^2u|+|D^2\phi|+|D^2\hat{\phi}|\right).
\endaligned
\end{equation}
Let $Q_{\de,u,\phi}=(Q^1_{\de,u,\phi},\cdots,Q^m_{\de,u,\phi})$ with
\begin{equation}\aligned\label{Qadewphi}
Q^\a_{\de,u,\phi}=-\f1{\de}\int_0^\de\left(\int_0^\tau\f{\p^2}{\p s^2}\De_{M_s}u_s^\a ds\right) d\tau.
\endaligned
\end{equation}
Then for each $\a=1,\cdots,m$
\begin{equation}\aligned\label{BdQadeuphi}
\left|Q^\a_{\de,u,\phi}\right|\le c_n\de(1+|Du|^2+|D\phi|^2)^3\left(|D^2u|+|D^2\phi|\right).
\endaligned
\end{equation}
From \eqref{w-hatw*}, we have
\begin{equation}\aligned\label{w-hatw}
&\left|Q^\a_{\de,u,\phi}-Q^\a_{\de,u,\hat{\phi}}\right|\le c_n\de(1+|Du|^2+|D\phi|^2+|D\hat{\phi}|^2)^3|D^2(\phi-\hat{\phi})|\\
&+c_n\de|D(\phi-\hat{\phi})|(1+|Du|^2+|D\phi|^2+|D\hat{\phi}|^2)^{5/2}\left(|D^2u|+|D^2\phi|+|D^2\hat{\phi}|\right).
\endaligned
\end{equation}
Suppose that $M_\de$ is a minimal graph, then
\begin{equation}\aligned\label{DeMdew0}
0=\De_{M_\de}u_\de^\a-\De_{M_0} u_0^\a=\int_0^\de\f{\p}{\p s}\De_{M_s}u_s^\a=&\int_0^\de\left(\int_0^\tau\f{\p^2}{\p s^2}\De_{M_s}u_s^\a ds+L\phi^\a\right) d\tau\\
=&-\de Q^\a_{\de,u,\phi}+\de L\phi^\a,
\endaligned
\end{equation}
which implies
\begin{equation}\aligned\label{DeMdew0*}
L\phi^\a=Q^\a_{\de,u,\phi}.
\endaligned
\end{equation}

Now we assume that $\Om$ is mean convex. Let $\k_\Om$ denote the largest absolute principle curvature of $\p\Om$, i.e.,
$\k_\Om=\sup_{\p\Om}\sup_{i=1,\cdots,n-1}|\la_i(D^2d)|$, where $d=d(\cdot,\p\Om)$ on $\overline{\Om}$.
For any $\varphi\in C^2(\overline{\Om})$,
from Jenkins-Serrin \cite{JS} (see also Theorem 16.8 and Theorem 14.9 in \cite{GT}), there is a smooth solution $w$ to the minimal surface equation \eqref{MS0} with $w=\varphi$ on $\p\Om$ such that
$\sup_\Om|Dw|\le C_{\Om,\varphi}$ for a constant $C_{\Om,\varphi}>0$ depending only on $n,\mathrm{diam}\Om,\sup_\Om(|D\varphi|+|D^2\varphi|)$.
From Theorem 13.7 in \cite{GT} and $W^{2,p}$-estimates (see \cite{W} for instance), for any $p>n$ there is a constant $C_{p,\Om,\varphi}$ depending only on $n,p,\mathrm{diam}\Om,\k_\Om,\sup_\Om(|D\varphi|+|D^2\varphi|)$ such that
\begin{equation}\aligned
||w||_{W^{2,p}(\Om)}\triangleq \int_\Om\left(|w|^p+|Dw|^p+|D^2w|^p\right)\le C_{p,\Om,\varphi}.
\endaligned
\end{equation}
By the Sobolev embedding theorem, we can assume the H\"older norm $|Dw|_{\g,\Om}\le C_{p,\Om,\varphi}$ with $\g=1-n/p$.

Suppose $(u^1,\cdots,u^m)=(0,\cdots,0,w)$. In this case $g_{ij}=g^0_{ij}=\de_{ij}+w_iw_j$ from \eqref{gijs}. Moreover, $g^{ij}=\de_{ij}-w_iw_jv_w^{-2}$, and $g^{ij}w_j=w_iv_w^{-2}$.
From \eqref{LphiDes*}, one has
\begin{equation}\aligned\label{Lphiuuuu}
L\phi^\a=\f1{v_{w}}\p_i\left(g^{ij}\phi_j^\a v_w+w_j^\a\left(-2g^{lj}w_i\phi^m_{l}+g^{ij}w_{l}\phi^m_{l}\right)v_w^{-1}\right).
\endaligned
\end{equation}
Let $L_*$ be a linear differential operator of the second order defined by
\begin{equation}\aligned\label{L*DEF}
L_*\xi=\f1{v_{w}}\p_i\left(v_w\left(\de_{ij}-w_iw_j(v_w^{-2}+v_w^{-4})\right)\xi\right)
\endaligned
\end{equation}
for any $\xi\in C^2(\Om)$. Then from \eqref{Lphiuuuu} we have
\begin{equation}\label{LphiElliptic}
L\phi=(L\phi^1,\cdots,L\phi^m)=(\De_M\phi^1,\cdots,\De_M\phi^{m-1},L_*\phi^m).
\end{equation}

Now we shall use the Schauder fixed point theorem to show the existence of minimal graphs by perturbation of a given minimal graph of codimension one.
\begin{theorem}\label{ep-Ex}
For any mean convex bounded $C^2$ domain $\Om\subset\R^n$ with diameter $l=1$, $m\ge2$, $p>n$ and any $\varphi\in C^2(\overline{\Om})$, there is a constant $\de_{m,p,\Om,\varphi}>0$ depending on $n,m,p$, $\k_\Om$, $\sup_{\Om}|D\varphi|$ and $\sup_{\Om}|D^2\varphi|$ such that if the functions $\psi^1,\cdots,\psi^{m-1}\in C^{1,\g}(\overline{\Om})$ with $\g=1-n/p$ satisfy
$\sum_{\a=1}^{m-1}||\psi^\a||_{W^{2,p}(\Om)}\le \de_{m,p,\Om,\varphi}$,
then there is a solution $u=(u^1,\cdots,u^m)\in C^\infty(\Om)\cap C^{1,\g}(\overline{\Om})$ to the minimal surface system \eqref{000a} with $u=(\psi^1,\cdots,\psi^{m-1},\varphi)$ on $\p\Om$.
\end{theorem}
\begin{proof}
From the above argument, there is a smooth solution $w$ to the minimal surface equation \eqref{MS0} with boundary $\varphi$ such that
$\sup_\Om|Dw|+|Dw|_{\g,\Om}+||w||_{W^{2,p}(\Om)}\le C_{p,\Om,\varphi}$,
where $C_{p,\Om,\varphi}$ is a constant depending only on $n,p,\mathrm{diam}\Om,\k_\Om,\sup_\Om(|D\varphi|+|D^2\varphi|)$, $\g=1-n/p$.
Let $M$ be a graph over $\Om$ in $\R^{n+m}$ with the graphic function $(0,\cdots,0,w)$.
Let $\de$ be a constant in $(0,1)$ to be defined later, and $\psi^1,\cdots,\psi^{m-1}\in C^{1,\g}(\overline{\Om})$ with $\g=1-n/p$ such that
$\sum_{\a=1}^{m-1}||\psi^\a||_{W^{2,p}(\Om)}=\de^2$. Put $\hat{\psi}=(\hat{\psi}^1,\cdots,\hat{\psi}^m)=(\psi^1/\de,\cdots,\psi^{m-1}/\de,0)$.
For any $t>0$, let
$$\mathfrak{S}_t=\left\{\phi=(\phi^1,\cdots,\phi^m)\in C^{1,\g}(\overline{\Om},\R^m)\Big| \sum_\a||\phi^\a||_{W^{2,p}(\Om)}\le t\right\},$$
which is a compact set in the Banach space $W^{2,p}(\Om,\R^m)$ for any
  $t>0$. Moreover, $\mathfrak{S}_t$ is convex by the Minkowski inequality.

For any $\phi=(\phi^1,\cdots,\phi^m)\in \mathfrak{S}_1$, let $w_s=(w^1_s,\cdots,w^m_s)=(s\phi^1,\cdots,s\phi^{m-1},w+s\phi^m)$, and $M_s=\mathrm{graph}_{w_s}$ for $s\in[0,\de]$.
Let $Q^\a_{\de,w,\phi}$ be defined as \eqref{Qadewphi}, where $u$ is replaced by $(0,\cdots,0,w)$, $u^\a_s$ is replaced by $w^\a_s$. For any $\a=1,\cdots,m-1$, there is a unique solution $\xi^\a\in C^\infty(\Om)\cap C^{1,\g}(\overline{\Om})$ to
\begin{equation}
\left\{\begin{split}
\De_M\xi^\a =& Q^\a_{\de,w,\phi}\qquad\ \ &\mathrm{in}\ \Om\\
\xi^\a =&\hat{\psi}^\a\qquad &\mathrm{on}\ \p \Om\\
\end{split}\right.,
\end{equation}
such that
\begin{equation}\aligned\label{xiaQhpsi}
||\xi^\a||_{W^{2,p}(\Om)}\le C_{p,\Om,\varphi}\left(||Q^\a_{\de,w,\phi}||_{L^p(\Om)}+||\hat{\psi}^\a||_{W^{2,p}(\Om)}\right).
\endaligned
\end{equation}
where $C_{p,\Om,\varphi}$ is a general constant depending only on $n,p,\mathrm{diam}\Om,\k_\Om,\sup_\Om(|D\varphi|+|D^2\varphi|)$. From \eqref{Qadewphi} and the definition of $\hat{\psi}$, for a suitable constant $C_{p,\Om,\varphi}$ we get
\begin{equation}\aligned\label{xiaW2p}
||\xi^\a||_{W^{2,p}(\Om)}\le C_{p,\Om,\varphi}\de.
\endaligned
\end{equation}
Let $L_*$ be the operator defined in \eqref{L*DEF}, then
$L_*$ is uniformly elliptic since $1-|Dw|^2(v_w^{-2}+v_w^{-4})=v_w^{-4}$.
Then for a suitable constant $C_{p,\Om,\varphi}$, there is a unique solution $\xi^m\in C^\infty(\Om)\cap C^{1,\g}(\overline{\Om})$ to
\begin{equation}
\left\{\begin{split}
L_*\xi^m =& Q^m_{\de,w,\phi}\qquad\ \ &\mathrm{in}\ \Om\\
\xi^m =&0\qquad &\mathrm{on}\ \p \Om\\
\end{split}\right.,
\end{equation}
such that
\begin{equation}\aligned\label{ximW2p}
||\xi^m||_{W^{2,p}(\Om)}\le C_{p,\Om,\varphi}||Q^m_{\de,w,\phi}||_{L^p(\Om)}\le C_{p,\Om,\varphi}\de.
\endaligned
\end{equation}

Let $T_{\psi}$ be the operator defined by letting $\xi=(\xi^1,\cdots,\xi^m)=T_{\psi}\phi$ be the unique solution in $W^{2,p}(\Om)$ of the following linear Dirichlet problem,
\begin{equation}\label{Dzxi}
\left\{\begin{split}
L\xi^\a=& Q^\a_{\de,w,\phi} \qquad\ \ &\mathrm{in}\ \Om\\
\xi^\a=&\hat{\psi}^\a\qquad\qquad &\mathrm{on}\ \p \Om\\
\end{split}\right.
\end{equation}
for each $\a=1,\cdots,m$.
Combining \eqref{xiaW2p} and \eqref{ximW2p}, we have
\begin{equation}\aligned\label{xiW2p}
\sum_{\a=1}^m||\xi^\a||_{W^{2,p}(\Om)}\le mC_{p,\Om,\varphi}\de.
\endaligned
\end{equation}
Put $\de=\f1{mC_{p,\Om,\varphi}}$, then $\xi\in \mathfrak{S}_1$.
Combining this with \eqref{w-hatw}, $T_{\psi}$ is a continuous mapping from $\mathfrak{S}_1$ into $\mathfrak{S}_1$.
From \eqref{BdQadeuphi}\eqref{xiaQhpsi}\eqref{ximW2p}, $T_\psi$ is a compact operator, i.e., $T_\psi(K)$ is precompact for any compact $K$ in $W^{2,p}(\Om,\R^m)$.
Hence by the Schauder fixed point theorem (see also \cite{GT}), there is a fixed point $\phi_*=(\phi_*^1,\cdots,\phi_*^m)\in\mathfrak{S}_1$ for the operator $T_{\psi}$. Namely,
$$\phi_*=T_{\psi}\phi_*.$$
Let $u^\a=\de\phi_*^\a$ for $\a=1,\cdots,m-1$ and $u^m=\de\phi_*^m+w$. From \eqref{DeMdew0}\eqref{DeMdew0*}, $(u^1,\cdots,u^m)$ is a smooth solution to the minimal surface system \eqref{000a} with $u^m=\de\phi_*^\a+w=\varphi$ and $u^\a=\de\phi_*^\a=\de\hat{\psi}=\psi^\a$ on $\p\Om$ for $\a=1,\cdots,m-1$.
\end{proof}

\section{Non-existence results for solutions of Dirichlet problems}

\begin{theorem}\label{NONEXIST}
Let $\Om$ be a bounded domain in $\R^n(n\ge3)$ with a smooth mean convex boundary, and suppose that there is a point $q\in\p\Om$ such that $\p\Om$ is not convex, but has zero mean curvature at $q$. Then for any constant $0<\ep\le1$, there exists a vector-valued function $\psi=(\psi^1,\psi^2)\in C^2(\overline{\Om},\R^2)$ with $|D\psi^1|\le\ep$ such that the minimal surface system \eqref{000a} has no classical solution with boundary data $(\psi^1,\psi^2)$.
\end{theorem}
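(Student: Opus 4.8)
The plan is to argue by contradiction, choosing the small component $\psi^1$ and the unrestricted component $\varphi=\psi^2$ so that, for any hypothetical classical solution $u=(u^1,u^2)$, the induced metric $g_{ij}=\de_{ij}+\sum_\a u^\a_iu^\a_j$ is forced to be so anisotropic near $q$ that $\p\Om$ becomes \emph{non--mean-convex as seen by the operator $g^{ij}\p_{ij}$} at $q$; the classical Jenkins--Serrin/Serrin obstruction -- which at a concave boundary point forces the gradient of a would-be solution to blow up -- then rules out existence. First I would fix local coordinates at $q$: translate $q$ to the origin, rotate so the interior normal is $E_n$, write $\p\Om$ locally as $\{x_n=h(x')\}$ with $h(0)=0$, $Dh(0)=0$, and diagonalize $D^2h(0)=\mathrm{diag}(\k_1,\dots,\k_{n-1})$. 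Mean convexity gives $\sum_i\k_i\le0$, zero mean curvature at $q$ gives $\sum_i\k_i=0$, and non-convexity at $q$ gives some $\k_j<0$; since $n\ge3$ there is then also a $\k_i>0$. Relabel so that $\k_1>0$ is a ``cheap'' direction and $\k_2<0$ is the ``bad'' direction. This is exactly where $n\ge3$ is used: for $n=2$, vanishing geodesic curvature already forces local convexity, so no such example can exist.

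Next I would prescribe the data with one large parameter $N$ and one small parameter $\ep_0=\ep_0(\Om,q)\le\ep$ (depending only on $\k_1,\k_2$ and $r_{_\Om}$; for $\ep$ small simply take $\ep_0=\ep$, for $\ep$ large take $\ep_0$ as dictated by the geometry): let $\psi^1\in C^2(\overline\Om)$ have $\psi^1(q)=0$, $|D\psi^1|\le\ep_0$, with tangential gradient $\ep_0\,e_1$ along $\p\Om$ near $q$ (aligned with the cheap direction); and let $\varphi=\psi^2\in C^2(\overline\Om)$ have $\varphi(q)=0$, tangential gradient $N\,e_2$ along $\p\Om$ near $q$ (aligned with the bad direction), and be small away from a fixed small neighborhood of $q$ -- a ``spike'' at $q$ in the bad direction. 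Since $u=\psi$ on $\p\Om$, the tangential part of $Du$ on $\p\Om$ is prescribed, so on $\p\Om$ near $q$ one has $g_{22}\gtrsim1+N^2$, $g_{11}$ close to $1$ up to the mixed term $g_{1n}$, and $g_{ij}$ close to $\de_{ij}$ for $\{i,j\}\subset\{1,3,\dots,n-1\}$. Using $\sum_i\k_i=0$, one computes at $q$
\begin{equation}\aligned\nonumber
g^{ij}d_{ij}=\k_1\big(1-g^{11}\big)+\k_2\big(1-g^{22}\big)+\sum_{i=3}^{n-1}\k_i\big(1-g^{ii}\big),
\endaligned
\end{equation}
and heuristically, as $N$ grows the bad direction dominates -- $g^{22}\to0$ while $1-g^{11}=O(\ep_0^2)$ once the mixed entry $g_{1n}$ is under control -- so $g^{ij}d_{ij}|_q\to\k_2+O(\ep_0^2)<0$ for $\ep_0$ small in terms of $\k_1/|\k_2|$. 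Thus $\p\Om$ is non--mean-convex for $g^{ij}\p_{ij}$ at $q$.

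With $g^{ij}d_{ij}|_q<0$, the final step is the obstruction: the equation $g^{ij}u^2_{ij}=0$ -- linear and uniformly elliptic for fixed $u$, but degenerating as $|Du|\to\infty$ -- cannot be solved with the spiked boundary data $\varphi$ by any solution that is Lipschitz up to $\p\Om$. The classical Serrin comparison argument, run on $\Om\cap B_\rho(q)$ with a barrier of the type constructed in Section 3 but with the curvature term now of the \emph{wrong} sign, forces $|Du|\to\infty$ as $q$ is approached; this contradicts the regularity recalled in the Preliminaries, namely that a classical (Lipschitz-up-to-the-boundary) solution lies in $C^{1,\g}(\overline\Om)$ and has a single-half-plane tangent cone of density $\f12$ at every boundary point. (If ``classical'' is meant only as $C^2(\Om)\cap C^0(\overline\Om)$, one additionally rules out solutions with $|Du|\to\infty$ at $q$ via the Jenkins--Serrin tangent-cone analysis.) Lemma \ref{Maxlaij} confirms that in any case such a solution would have $\sup_\Om\big|\bigwedge^2du\big|\gtrsim\ep_0N\gg1$, so none of the existence results of Sections 4--5 can rescue it.

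The main obstacle is that the metric $g$ in which we need $\p\Om$ to be non--mean-convex at $q$ is built from the very solution whose existence we deny, and $|Du^2|$ (equivalently $\partial_\nu u^2$, which is exactly what ``sticks up'' near a concave point) is not bounded a priori -- only $\sup_\Om|u^1|\le\ep_0 l$ and $\inf\varphi\le u^2\le\sup\varphi$ are free -- so the heuristic of the second step is circular as it stands, and making it rigorous is the heart of the matter. I see two routes. (a) A simultaneous barrier for the pair $(u^1,u^2)$: use the cheap bound $\sup_\Om|u^1|\le\ep_0 l$ together with a genuine barrier for $u^1$ alone (legitimate because $\p\Om$ \emph{is} mean convex, via the constructions of Section 3) to control $\partial_\nu u^1$, hence $g_{1n}$, near $q$ independently of $N$, and then close the Serrin comparison for $u^2$. (b) A blow-up/continuity argument: assume classical solutions $u_N$ exist for all large $N$, rescale $\mathrm{graph}_{u_N}$ near $q$ by $\big(\sup_{\p\Om\cap B_\rho(q)}|D\varphi_N|\big)^{-1}$, use the interior curvature estimates and the area-decreasing machinery of Section 3 to control the limit, and show the limiting minimal graph has boundary data incompatible with being a graph over the model half-space region that is concave in the $e_2$-direction. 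Quantifying how large $N$ -- hence $|\varphi|_{C^2(\Om)}$ -- must be in terms of $\ep$, $l$, $r_{_\Om}$ and $|\k_2|$, which also yields the assertion that $\ep_1,\ep_2$ in Theorem \ref{main0} must depend on $|\varphi|_{C^2(\Om)}$, is the crux.
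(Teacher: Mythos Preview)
Your proposal has a genuine gap --- the circularity you identify in the third paragraph is real, and neither of your proposed routes (a) or (b) closes it. Route (a) requires a boundary gradient bound for $u^1$ that is uniform in $|Du^2|$, but the barriers of Section 3 all depend on $v_u$ and hence on $|Du^2|$, so the circularity persists. Route (b) has no clear mechanism either: the area-decreasing machinery gives nothing once $\big|\bigwedge^2du\big|$ is large, which is precisely the regime you need.

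The paper's approach avoids the circularity by a single clean trick you did not find: take $\psi^1$ to be \emph{linear}, $D\psi^1=\ep E_1$, aligned with a direction $e_1$ in which $\p\Om$ is strictly concave at $q$ (so $d_{11}<0$). Because $D^2\psi^1\equiv0$, the function $\psi^1$ satisfies $g^{ij}\psi^1_{ij}=0$ for \emph{any} metric $g$; since $u^1-\psi^1$ then solves a linear uniformly elliptic equation with zero boundary data, the maximum principle forces $u^1\equiv\psi^1$ on all of $\overline\Om$. This eliminates one unknown entirely and fixes the metric to be $P_{ij}=\de_{ij}+\ep^2\de_{i1}\de_{j1}+u^2_iu^2_j$, whose anisotropic part is now explicit.

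The second difference is strategic and opposite to yours. You try to make $g^{ij}d_{ij}<0$ at $q$ and invoke a blow-up obstruction. The paper instead shows $P^{ij}d_{ij}>0$ near $q$ (using $\De d\approx0$ there and the explicit $\ep^2$-anisotropy in the concave direction), and uses this \emph{positivity} to build an upper barrier of the form $\chi(d)$ for $u^2$ on $B_a(q)\cap\Om$. A second barrier $\phi(|x-q|)$ on $\Om\setminus B_a(q)$ (here $n\ge3$ is used) controls $\sup_{\p B_a}u^2$ by $\sup_{\p\Om\setminus B_a}\varphi$. The upshot is an a priori bound
\[
u^2(x)\le C(\ep,\Om)+\sup_{\p\Om\setminus B_a(q)}\varphi\qquad\text{for }x\in\p\Om\cap B_a(q),
\]
independent of $\varphi|_{\p\Om\cap B_a(q)}$. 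Choosing $\varphi$ large at $q$ and bounded on $\p\Om\setminus B_a(q)$ then gives the contradiction directly --- no blow-up, no tangent cones.
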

\begin{proof}
Without loss of generality, we may assume that $q$ is the origin, the unit normal vector $\nu$ at 0 to $\p\Om$ is parallel to the axis $x_n$, and $\lan D_{e_1}e_1,\nu\ran<0$ at $0$. Here, $e_1$ is a unit tangent vector field of $\p\Om$ in a neighborhood of the origin, such that $e_1$ is parallel to the axis $x_1$ at 0.

Let $\psi$ be a linear function on $\overline{\Om}$ such that $D\psi=\ep E_1$ for some fixed constant $0<\ep\le1$. Here, $\{E_i\}_{i=1}^{n-1}$ is a standard basis of $\R^n$ such that $E_n$ is parallel to the axis $x_n$.
Let $\varphi$ be a smooth function on $\overline{\Om}$ to be defined later.
Assume that there is a smooth solution $(u,v)$ of the minimal surface system
\begin{equation}\label{nDMG}
\left\{\begin{split}
g^{ij}u_{ij}=g^{ij}v_{ij}=0\qquad \mathrm{in}\ \Om\\
u=\varphi,\ \ v=\psi\qquad \mathrm{on}\ \p\Om\\
\end{split}\right.\qquad\qquad\qquad
\end{equation}
with $g_{ij}=\de_{ij}+ u_iu_j+v_iv_j$.

Let $p_{ij}=\de_{ij}+u_i u_j+\psi_i\psi_j$, and $(p^{ij})$ be the inverse matrix of $(p_{ij})$. Note that $D^2\psi\equiv0$, which means that
$$\sum_{i,j=1}^np^{ij}\psi_{ij}=0.$$
Then by the maximum principle, $v=\psi$ on $\overline{\Om}$.

Put $P_{ij}=\de_{ij}+\psi_i \psi_j+w_iw_j$ for some function $w\in C^2(\overline{\Om})$. Then
\begin{equation}\aligned\nonumber
P_{11}=1+\ep^2+w_1^2,
\endaligned
\end{equation}
and
\begin{equation}\aligned\nonumber
P_{ij}=\de_{ij}+w_iw_j,\qquad \mathrm{for}\ 1\le i,j\le n,\ i+j\neq2.
\endaligned
\end{equation}
Set
$$|w|_{\ep,1}^2=\f{w_1^2}{1+\ep^2}+\sum_{i=2}^{n}w_i^2.$$
The components of the inverse matrix of $(P_{ij})$ are
\begin{equation}\aligned\label{P11i}
P^{11}=\f1{1+\ep^2}-\f{w_1^2}{(1+|w|^2_{\ep,1})(1+\ep^2)^2},\qquad P^{i1}=-\f{w_1w_i}{(1+|w|^2_{\ep,1})(1+\ep^2)}\qquad \mathrm{for}\ i\ge2,
\endaligned
\end{equation}
and
\begin{equation}\aligned\label{Pij}
P^{ij}=\de_{ij}-\f{w_iw_j}{1+|w|^2_{\ep,1}},\qquad \mathrm{for}\ 2\le i,j\le n.
\endaligned
\end{equation}
One can check this easily as follows. We see $P_{ij}$ and $P^{ij}$ as smooth functions of $\ep^2$. Let $P_{11}(t)=1+t+w_1^2$ and $P_{ij}(t)=\de_{ij}+w_iw_j$ for all $t\ge0$, then $(P_{11})'=\f{\p}{\p t}P_{11}=1$ and $(P_{ij})'=0$ for all $i,j\ge2$. Let $(P^{ij})=(P^{ij}(t))$ be the inverse matrix of $(P_{ij}(t))$, then
\begin{equation}\aligned\label{Piji1j1}
(P^{ij})'=-P^{i1}P^{j1}.
\endaligned
\end{equation}
We solve ODE \eqref{Piji1j1} and get $P^{11}(t)=P^{11}(0)/(1+P^{11}(0)t)$, $P^{i1}(t)=P^{i1}(0)/(1+P^{11}(0)t)$ for all $i\ge2$, $P^{ij}(t)=P^{ij}(0)-P^{i1}(0)P^{j1}(0)t/(1+P^{11}(0)t)$ for all $i,j\ge2$. Taking $t=\ep^2$, we can show \eqref{P11i} and \eqref{Pij}.

Let $d(x)=d(x,\p\Om)$ for  $x\in\overline{\Om}$ as before. Set $d_{ij}=\p_{x_i}\p_{x_j}d$ for $1\le i,j\le n$. Let $\{e_i\}_{i=1}^{n-1}$ be a local orthonormal basis in a neighborhood of the origin, such that $e_1$ is parallel to the axis $x_1$ at 0. Set $\Om_t=\{x\in\Om|\ d(x,\Om)>t\}$ for $t\ge0$ as before. Let $e_n$ be the unit normal vector field to $\p\Om_t$ so that $e_n$ points into $\Om_t$.
Since $d$ is a constant on $\p\Om_t$, then at the point $0\in\p\Om$ we get
$\left(D_{e_1}D_{e_1}-(D_{e_1}e_1)^T\right)d=0$, and
$$\p_{x_1}\p_{x_1}d=d_{11}=(D_{e_1}e_1)^Td=(D_{e_1}e_1)d-\lan D_{e_1}e_1,e_n\ran D_{e_n}d=\lan D_{e_1}e_1,e_n\ran<0.$$
Since $\De d=H_{\p\Om}$ on $\p\Om$, from the assumption there is a positive constant $a>0$ such that
\begin{equation}\aligned\label{d11Ded}
d_{11}<-a\qquad \mathrm{and} \qquad \De d\ge-\f{a\ep^2}{2(1+\ep^2)}
\endaligned
\end{equation}
on $B_{a}(0)\cap\Om$.

Let $\chi$ be a $C^2$-function on $(0,a)$ such that $\chi(2a)=0$, $\chi'\le0$, $\chi'(0)=-\infty$. Let $\chi_\de(t)=\chi\left(t-\de\right)$ for any $t\in(\de,a)$.
Set $\Om_{\de,a}=\{x\in B_a(0)\cap\Om|\ \de<d(x)<a\}$, and
$$w(x)=\chi_\de(d(x))+\sup_{|y|=a}u(y)\qquad \mathrm{for}\ x\in \Om_{\de,a}.$$
Put $|d|^2_{\ep,1}=\f{d_1^2}{1+\ep^2}+\sum_{i=2}^{n}d_i^2$, then $|w|^2_{\ep,1}=(\chi_\de')^2|d|^2_{\ep,1}$. For such $w$, from \eqref{P11i}\eqref{Pij} we have
\begin{equation}\aligned\label{P11w}
P^{11}=\f1{1+\ep^2}-\f{(\chi_\de')^2d^2_1}{\left(1+(\chi_\de')^2|d|^2_{\ep,1}\right)(1+\ep^2)^2},
\endaligned
\end{equation}
and for $2\le i,j\le n$
\begin{equation}\aligned\label{P1iijw}
P^{i1}=-\f{(\chi_\de')^2d_1d_i}{\left(1+(\chi_\de')^2|d|^2_{\ep,1}\right)(1+\ep^2)}, \qquad P^{ij}=\de_{ij}-\f{(\chi_\de')^2d_id_j}{1+(\chi_\de')^2|d|^2_{\ep,1}}.
\endaligned
\end{equation}
So one has
\begin{equation}\aligned\label{6.4}
\sum_{i,j=1}^nP^{ij}d_id_j=&\f{d_1^2}{1+\ep^2}-\f{(\chi_\de')^2d^4_1}{\left(1+(\chi_\de')^2|d|^2_{\ep,1}\right)(1+\ep^2)^2}
-2\sum_{i=2}^n\f{(\chi_\de')^2d_1^2d_i^2}{\left(1+(\chi_\de')^2|d|^2_{\ep,1}\right)(1+\ep^2)}\\
&+\sum_{i,j=2}^n\left(\de_{ij}d_id_j-\f{(\chi_\de')^2d_i^2d_j^2}{1+(\chi_\de')^2|d|^2_{\ep,1}}\right)\\
=&|d|_{\ep,1}^2-\f{(\chi_\de')^2|d|_{\ep,1}^4}{1+(\chi_\de')^2|d|^2_{\ep,1}}=\f{|d|_{\ep,1}^2}{1+(\chi_\de')^2|d|^2_{\ep,1}}\le\f{1}{(\chi_\de')^2}.
\endaligned
\end{equation}
Note that $\sum_{i=1}^nd_{ij}d_i=\f12D_j|Dd|^2=0$.
Then
\begin{equation}\aligned
\sum_{i,j=1}^nP^{ij}d_{ij}=&\f{d_{11}}{1+\ep^2}-\f{(\chi_\de')^2d_{11}d^2_1}{\left(1+(\chi_\de')^2|d|^2_{\ep,1}\right)(1+\ep^2)^2}
-2\sum_{i=2}^n\f{(\chi_\de')^2d_{1i}d_1d_i}{\left(1+(\chi_\de')^2|d|^2_{\ep,1}\right)(1+\ep^2)}\\
&+\sum_{i,j=2}^n\left(\de_{ij}d_{ij}-\f{(\chi_\de')^2d_{ij}d_id_j}{1+(\chi_\de')^2|d|^2_{\ep,1}}\right)\\
=\f{d_{11}}{1+\ep^2}&+\sum_{i=2}^nd_{ii}-\f{(\chi_\de')^2}{1+(\chi_\de')^2|d|^2_{\ep,1}}\left(\f{d_{11}d^2_1}{(1+\ep^2)^2}
+2\sum_{i=2}^n\f{d_{1i}d_1d_i}{1+\ep^2}-\sum_{i=2}^nd_{1i}d_1d_i\right)\\
=\f{d_{11}}{1+\ep^2}&+\sum_{i=2}^nd_{ii}-\f{(\chi_\de')^2d_{11}d^2_1}{1+(\chi_\de')^2|d|^2_{\ep,1}}\f{\ep^4}{(1+\ep^2)^2}\\
\endaligned
\end{equation}
Combining this with \eqref{d11Ded} we have
\begin{equation}\aligned\label{6.6}
\sum_{i,j=1}^nP^{ij}d_{ij}\ge\f{d_{11}}{1+\ep^2}+\sum_{i=2}^nd_{ii}=-\f{\ep^2}{1+\ep^2}d_{11}+\De d>\f{\ep^2a}{1+\ep^2}-\f{a\ep^2}{2(1+\ep^2)}=\f{a\ep^2}{2(1+\ep^2)}
\endaligned
\end{equation}
on $B_a(0)\cap\Om$.
Set
$$\chi(d)=\f{2\sqrt{1+\ep^2}}{\sqrt{a}\ep}\left(\sqrt{2a}-\sqrt{d}\right),$$
then $\chi'=-\f{\sqrt{1+\ep^2}}{\sqrt{a}\ep\sqrt{d}}$ and $\chi''=\f{\sqrt{1+\ep^2}}{2\sqrt{a}\ep d^{\f32}}\ge0$ on $(0,a)$. Hence on $\Om_{\de,a}$
\begin{equation}\aligned
\sum_{i,j=1}^nP^{ij}\p_{x_i}\p_{x_j}\chi_\de=&\chi_\de'\sum_{i,j=1}^nP^{ij}d_{ij}+\chi_\de''\sum_{i,j=1}^nP^{ij}d_{i}d_j
<\chi_\de'\f{a\ep^2}{2(1+\ep^2)}+\f{\chi_\de''}{(\chi_\de')^2}\\
=&-\f{\sqrt{1+\ep^2}}{\sqrt{a}\ep\sqrt{d-\de}}\f{a\ep^2}{2(1+\ep^2)}+\f{\sqrt{a}\ep}{2\sqrt{1+\ep^2}}\f1{\sqrt{d-\de}}=0.
\endaligned
\end{equation}
Since $\chi_\de'=-\infty$ on $\p\Om_{\de,a}\setminus\p B_a(0)$, then by Theorem 13.10 in \cite{GT}, we have
$$u\le \chi_\de+\sup_{|y|=a}u(y)\qquad \mathrm{on}\ \Om_{\de,a}.$$
Letting $\de\rightarrow0$, then
\begin{equation}\aligned\label{ulechi}
u\le \f{2\sqrt{2(1+\ep^2)}}{\ep}+\sup_{|y|=a}u(y)\qquad \mathrm{on}\ B_a(0)\cap\Om.
\endaligned
\end{equation}

Let $\r(x)=|x|$, and $l=\mathrm{diam}(\Om)$. Then for every $i\in\{1,\cdots,n\}$,
$$0\le \r_{ii}\triangleq\p_{x_i}\p_{x_i}\r=\f1{|x|}-\f{x_i^2}{|x|^3}\le\f1{|x|},$$
and $\De\r=\f{n-1}{|x|}$.
Choose $\phi\in C^2((a,l))$ such that $\phi(l)=0$, $\phi'\le0$ and $\phi'(a)=-\infty$.
Set
$$w(x)=\phi(\r(x))+\sup_{\p\Om\setminus B_a(0)}u\qquad \mathrm{for\ any}\ x\in\Om\setminus B_a(0).$$
Note that $|D\r|=1$, then on $\Om\setminus \overline{B_a(0)}$, analogously to the proof of \eqref{6.4}-\eqref{6.6}, one has
\begin{equation}\aligned
\sum_{i,j=1}^nP^{ij}\r_i\r_j\le\f{1}{1+(\phi')^2},
\endaligned
\end{equation}
and
\begin{equation}\aligned
\sum_{i,j=1}^nP^{ij}\r_{ij}=&\f{\r_{11}}{1+\ep^2}+\sum_{i=2}^n\r_{ii}-\f{(\phi')^2\r_{11}\r^2_1}{1+(\phi')^2(\r_1^2+(1+\ep^2)\sum_{2\le i\le n}\r_i^2)}\f{\ep^4}{1+\ep^2}\\
\ge&-\f{\ep^2}{1+\ep^2}\r_{11}+\De \r-\f{\ep^4}{1+\ep^2}\r_{11}\ge-\f{\ep^2}{|x|}+\f{n-1}{|x|}.
\endaligned
\end{equation}
By assumption $\ep\le1$ and $n\ge3$, we have
\begin{equation}\aligned
\sum_{i,j=1}^nP^{ij}\r_{ij}\ge\f{1}{\r}.
\endaligned
\end{equation}
Let
$$\phi(\r)=\int_\r^l\left(\log\f ta\right)^{-\f12}dt.$$
Then $\phi'=-\left(\log\f{\r}a\right)^{-\f12}$ and $\phi''=\f12\left(\log\f{\r}a\right)^{-\f32}\f1\r$.
On $\Om\setminus \overline{B_a(0)}$, we have
\begin{equation}\aligned
\sum_{i,j=1}^nP^{ij}\phi_{ij}=&\phi'\sum_{i,j=1}^nP^{ij}\r_{ij}+\phi''\sum_{i,j=1}^nP^{ij}\r_{i}\r_j
\le\f{\phi'}{\r}+\f{\phi''}{(\phi')^2}\\
=&-\left(\log\f {\r}a\right)^{-\f12}\f1{\r}+\f1{2\r\sqrt{\log\f{\r}a}}<0.
\endaligned
\end{equation}
Since $\phi'(a)=-\infty$, then by Theorem 13.10 in \cite{GT}, we have
$$u\le \phi+\sup_{\p\Om\setminus B_a(0)}u(y)\qquad \mathrm{on}\ \Om\setminus B_a(0).$$
Combining \eqref{ulechi}, we obtain
\begin{equation}\aligned
u\le \f{2\sqrt{2(1+\ep^2)}}{\ep}+\phi(a)+\sup_{\p\Om\setminus B_a(0)}u(y)\qquad \mathrm{on}\ \Om\cap B_a(0).
\endaligned
\end{equation}
Hence on $\p\Om\cap B_a(0)$, $u$ cannot be arbitrary. For instance, if there is a point $x\in\p\Om\cap B_a(0)$ such that
$$\varphi(x)>\f{2\sqrt{2(1+\ep^2)}}{\ep}+\phi(a)+\sup_{\p\Om\setminus B_a(0)}\varphi(y),$$
then the minimal surface system \eqref{nDMG} has no  classical solution.
\end{proof}

\section{Uniqueness of solutions of Dirichlet problems}

In this section, we will study the uniqueness of strictly stable minimal submanifolds with fixed boundary. For simplicity, we assume that $M$ is an $n$-dimensional smooth minimal submanifold in $\R^{n+m}$ with smooth boundary.
Let $\{e_i\}_{i=1}^n$ be a local orthonormal frame field of $M$ at the considered point. Let $A_{ij}=\overline{\na}_{e_i}e_j-\na_{e_i}e_j$ be the components of the second fundamental form of $M$.
Let $\Phi$ be a smooth vector field in the normal space $NM$. 
Assume that $\ep_0$ is sufficiently small, such that the hypersurface $M_s=M+s\Phi$, given by
\begin{equation}\aligned\nonumber
\mathrm{F}(\cdot,s):\ M\rightarrow\R^{n+m}\quad \mathrm{with}\ \ \ \mathrm{F}(p,s)=p+s\Phi(p)
\endaligned
\end{equation}
is smooth for every $|s|<\ep_0$. Let $H_s$ be the mean curvature vector of $M_s$, and $\De^\bot_M$ be the normal Laplacian on $M$ for the normal bundle $NM$.
By a standard computation,
\begin{equation}\aligned\label{varHs}
\f{\p}{\p s}H_s\Big|_{s=0}=\De^\bot_M\Phi+\sum_{i,j}\lan\Phi,A_{ij}\ran A_{ij}.
\endaligned
\end{equation}
Furthermore, from the appendix I,
\begin{equation}\aligned\label{Qs}
\f1sH_s=\De^\bot_M\Phi+\sum_{i,j}\lan\Phi,A_{ij}\ran A_{ij}+s\mathcal{Q}_s\left(\Phi,\na^\bot\Phi,(\na^\bot)^2\Phi\right),
\endaligned
\end{equation}
where $\mathcal{Q}_{s,\Phi}\triangleq\mathcal{Q}_s\left(\Phi,\na^\bot\Phi,(\na^\bot)^2\Phi\right)$ is a vector-valued function defined by the following combination
\begin{equation}\aligned
\mathcal{Q}_{s,\Phi}=&Y_0*\Phi*\Phi+Y_1*\Phi*\na^\bot\Phi+Y_2*\na^\bot\Phi*\na^\bot\Phi\\
&+Y_3*\Phi*(\na^\bot)^2\Phi+Y_4*\na^\bot\Phi*(\na^\bot)^2\Phi.
\endaligned
\end{equation}
Here, $Y_i$ are smooth vector fields depending on $s$, $\Phi$, $\na^\bot\Phi$, $(\na^\bot)^2\Phi$, and $\na^\bot$ is the normal connection. Hence, if $\sum_{i=0}^2\left|(\na^\bot)^i\Phi\right|$ is bounded and $s$ is sufficiently small, then $\sum_{i=0}^4|Y_i|$ is bounded, and
\begin{equation}\aligned\label{Qs*}
\left|\mathcal{Q}_{s,\Phi}\right|\le C_M\left(|\Phi|^2+|\na^\bot\Phi|^2+\big(|\Phi|+|\na^\bot\Phi|\big)\big|(\na^\bot)^2\Phi\big|\right),
\endaligned
\end{equation}
where $C_M$ depends only on the bounds of $R_M$, $\na R_M$ and $\na^2R_M$. Here, $R_M$ is the curvature tensor of $M$. Let $L_{M}$ be the second order operator defined by
\begin{equation}\aligned
L_M\xi=\De^\bot_M\xi+\sum_{i,j}\lan\xi,A_{ij}\ran A_{ij}
\endaligned
\end{equation}
for each vector field $\xi\in C^2(M\setminus\p M,NM)$. From \eqref{varHs}, $M$ is strictly stable if and only if the first eigenvalue of $L_M$ is positive.
Namely, there exists a constant $\ep_M>0$ such that
\begin{equation}\aligned\label{epMeigenvalueM}
\ep_M\int_M|\xi|^2\le-\int_M\lan \xi,L_M\xi\ran
\endaligned
\end{equation}
for any $\xi\in C_0^\infty(M\setminus\p M,NM)$.
\begin{lemma}\label{LemUNIGRAPH}
Let $M$ be an $n$-dimensional smooth strictly stable compact minimal submanifold with smooth boundary $\p M$ in $\R^{n+m}$.
There exists a constant $\de_M>0$ such that for any smooth minimal submanifold $S$ with boundary $\p M$, if $S=\{p+\Psi(p)|\, p\in M\}$ for some vector field $\Psi\in C_0^\infty(M,NM)$ with $|\na^\bot\Psi|_\Om+|(\na^\bot)^2\Psi|_\Om\le \de_M$, then $S=M$.
\end{lemma}
\begin{proof}
Let $\de=|\na^\bot\Psi|_\Om+|(\na^\bot)^2\Psi|_\Om$ with $\de\in(0,1]$, and $\Phi=\f1\de\Psi$. Then from \eqref{Qs}\eqref{Qs*}, one has
\begin{equation}\aligned\label{Hspabehnabe0}
L_M\Phi=\De^\bot_{M}\Phi+\sum_{i,j}\lan\Phi,A_{ij}\ran A_{ij}=-\de\mathcal{Q}_{\de,\Phi}
\endaligned
\end{equation}
with $|\mathcal{Q}_{\de,\Phi}|\le C_M'\left(|\Phi|+|\na^\bot_M\Phi|\right),$
where $C_M'$ is a constant depending only on $C_M$, and the diameter of $M$.
From \eqref{epMeigenvalueM}, we have
\begin{equation}\aligned\label{epMPhinaPhiM}
\ep_M\int_M|\Phi|^2\le&-\int_M\lan \Phi,L_M\Phi\ran=\de\int_M\lan \Phi,\mathcal{Q}_{\de,\Phi}\ran\\
\le& C_M'\de\int_M\left(|\Phi|^2+|\Phi|\cdot|\na^\bot_M\Phi|\right)
\le 2C_M'\de\int_M\left(|\Phi|^2+|\na^\bot_M\Phi|^2\right).
\endaligned
\end{equation}
Let $\{\textbf{n}^\a\}_{\a=1}^m$ be a fixed orthonormal frame for the normal space $NM$ such that $\na^\bot\mathbf{n}^\a=0$. Then we write $\Phi=\sum_\a\phi^\a\textbf{n}^\a$ for some vector-valued function $(\phi^1,\cdots,\phi^m)$ on $M$ with $\phi^\a=0$ on $\p M$. Denote $A_{ij}=h^\a_{ij}\textbf{n}^\a$.
From \eqref{Hspabehnabe0} (see also \eqref{Hspabehnabe}), one has
\begin{equation}\aligned
\De_M\phi^\a+\sum_{\a,i,j}h^\a_{ij}h^\be_{ij}\phi^\be +\de \mathcal{Q}^\a_{\de,\Phi}=0
\endaligned
\end{equation}
with $|\mathcal{Q}^\a_{\de,\Phi}|\le C_M'\left(|\Phi|+|\na^\bot_M\Phi|\right)$. From $W^{2,p}$-estimates,
\begin{equation}\aligned\label{phiW21L2}
\sum_\a||\phi||_{W^{1,2}(M)}\le C_M^*\sum_\a||\phi^\a||_{L^2(M)},
\endaligned
\end{equation}
where $C_M^*$ is a positive constant depending on the geometry of $M$.
Combining \eqref{epMPhinaPhiM} and \eqref{phiW21L2}, we deduce $\Phi\equiv0$ if we choose $\de$ sufficiently small depending on $M$. This completes the proof.
\end{proof}

Let $\Om$ be a smooth mean convex bounded domain in $\R^n$. Let $\varphi$ be a smooth function on $\p\Om$.
Let $\psi^1,\cdots,\psi^{m-1}$ be smooth functions on $\p\Om$ with $\sum_{\a=1}^{m-1}\sum_{i=0}^3|D^i\psi^\a|_{\p\Om}=1$.
Let $$\G_{t}=\{(x,t\psi^1(x),\cdots,t\psi^{m-1}(x),\varphi(x))\in\R^n\times\R^m|\ x\in\p\Om\}$$
for all $t\in[0,1]$.
\begin{proposition}\label{UNI}
There exists a constant $\de_{\Om,\varphi}>0$ depending only on $\Om$ and $\varphi$ such that for each $|t|\le\de_{\Om,\varphi}$
there is a unique minimal submanifold with boundary $\G_{t}$, and it coincides with the smooth solution obtained in Theorem \ref{ep-Ex}.
\end{proposition}
\begin{proof}
From Jenkins-Serrin \cite{JS}, there is a smooth solution $w$ to the minimal surface equation \eqref{MS0} with $w=\varphi$ on $\p\Om$. Let $M$ denote the graph over $\Om$ of the graphic function $(0,\cdots,0,\varphi)$ in $\R^{n+m}$. From \eqref{Lphiuuuu}-\eqref{LphiElliptic}, $M$ is a strictly stable minimal submanifold.\\
Let us prove this proposition by contradiction. Suppose that there are two sequences of smooth minimal submanifolds $\Si_k$ and $\Si_k'$ with $\Si_k\neq \Si_k'$ and $\p \Si_k=\p \Si_k'=\G_{t_k}$ for some sequence $t_k\rightarrow0$. By the Sobolev inequality on minimal submanifolds (see \cite{Si} for instance), the volumes of $\Si_k,\Si_k'$ are uniformly bounded. By compactness of varifolds (see \cite{LY,Si}), after choosing subsequences we may assume that $\Si_k$ converges to $\Si$ and $\Si_k'$ converges to $\Si'$ in the varifold sense, respectively.
Since $\p\Si=\p\Si'=\G_0$, by the maximum principle, $\Si$ and $\Si'$ live in an $(n+1)$-dimensional Euclidean space with boundary $=\{(x,,\varphi(x))\in\R^{n+1}|\ x\in\p\Om\}$.
Hence, $\Si=\Si'=M$. From Allard's regularity theorem \cite{A0} and the reflection principle (after flattening the boundary),  $\Si_k$ and $\Si_k'$ both  converge to $M$ smoothly since $\Si_k,\Si_k'$ are graphs over $\Om$.
In other words, there are smooth solutions $u_k,u'_k$ to the minimal surface system with $u_k=w_k=(t_k\psi^1,\cdots,t_k\psi^{m-1},\varphi)$ on $\p\Om$ such that $\Si_k=\mathrm{graph}_{u_k}$, $\Si'_k=\mathrm{graph}_{u_k'}$, and  $u_k$ and $u_k'$ both  converge smoothly to $(0,\cdots,0,w)$ on $\overline{\Om}$.
So, the strict stability of  $M$ implies that  $\Si_k$ and $\Si_k'$ both are strictly stable for sufficiently large $k>0$.
Moreover, for sufficiently large $k>0$, $S'_k$ can be seen as a graph over $S_{k}$ with $S'_k=\{p+\Psi_k(p)|\, p\in S_k\}$ for some vector field $\Psi_k\in C_0^\infty(S_k,NS_k)$ such that $\Psi_k$ converges to zero smoothly as $k\rightarrow\infty$. From Lemma \ref{LemUNIGRAPH}, we deduce $\Psi_k=0$ for sufficiently large $k>0$. This is a contradiction. We complete the proof.
\end{proof}


\section{Appendix I. Calculations for graphs over a submanifold}

In this appendix, we will calculate the mean curvature vectors for a one-parameter family $M_s$ over an $n$-dimensional smooth embedded submanifold $M$ in $\R^{n+m}$ (see the case of hypersurfaces by Colding-Minicozzi in \cite{CM1}). Let $\na^\bot$ denote the normal connection in $NM$ defined by
$$\na^\bot_X\nu=\left(\bn_X\nu\right)^\bot$$
for any $X\in\G(TM)$ and $\nu\in\G(NM)$.
Let $\{\textbf{n}^\a\}_{\a=1}^m$ be a fixed orthonormal frame for the normal space $NM$ such that $\na^\bot\mathbf{n}^\a=0$, and $(\phi^1,\cdots,\phi^m)$ be a vector-valued function on $M$. Let $M_s$ be given by
\begin{equation}\aligned\nonumber
F(\cdot,s):\ M\rightarrow\R^{n+m}\quad \mathrm{with}\ \ \ F(p,s)=p+s\phi^\a\textbf{n}^\a.
\endaligned
\end{equation}
Let $\{e_i\}_{i=1}^n$ be a local orthonormal frame for the tangent space $TM$. Let $A_{ij}=\overline{\na}_{e_i}e_j-\na_{e_i}e_j$ be the components of the second fundamental form of $M$, and
$$h^\a_{ij}=\lan A_{ij},\textbf{n}^\a\ran=\lan \overline{\na}_{e_i}e_j,\textbf{n}^\a\ran.$$
We extend both the functions $\phi^1,\cdots,\phi^m$ and the frame $\{e_i\}_{i=1}^n$ to a small neighborhood of $M$ by parallel translation along the normal frame, so that $\lan \textbf{n}^\a,\na\phi^\be\ran=0$ and $\bn_{\textbf{n}^\a}e_i=0$.

The tangent space of $M_s$ is spanned by $\{F_i\}_{i=1}^n$, where
\begin{equation}\aligned
F_i(p,s)=dF_{(p,s)}(e_i(p))=e_i(p)+s\phi^\a_i\textbf{n}^\a(p)-s\phi^\a h^\a_{ik}e_k(p).
\endaligned
\end{equation}
Note that $\{F_i\}$ is not orthonormal in general.
The metric $g_{ij}(p,s)$ of $M_s$ at $F(p,s)$ related to the frame $F_i$ is
\begin{equation}\aligned
g_{ij}(p,s)=\lan F_i,F_j\ran=\de_{ij}-2s\phi^\a h^\a_{ij}+s^2\phi^\a_i\phi^\a_j+s^2\phi^\a\phi^\be h^\a_{ik}h^\be_{jk}.
\endaligned
\end{equation}
Let $\mathbf{a}$ be the matrix with  elements $\mathbf{a}_{ij}=\de_{ij}-s\phi^\a h^\a_{ij}$, then $\mathbf{a}$ is positive definite if $s\phi$ is sufficiently small. Let $\mathbf{a}^{-1}$ be the inverse matrix of $\mathbf{a}$.
Put
\begin{equation}\aligned
\tilde{\textbf{n}}^\a_s(p)=\textbf{n}^\a(p)-s\na\phi^\a-s^2\left(\mathbf{a}^{-1}\right)_{ij}\phi^\be h^\be_{ik}\phi^\a_ke_j(p),
\endaligned
\end{equation}
then
$$\lan F_i(p,s),\tilde{\textbf{n}}^\a_s(p)\ran=s\phi^\a_i-s\phi^\a_j\mathbf{a}_{ij}-s^2\phi^\be h^\be_{ik}\phi^\a_k=0$$
for any $i=1,\cdots,n$ and $\a=1,\cdots,m$.

Let $P_{s,\phi}$ and $Q_{s,\phi}$ stand for general   functions of the form
\begin{equation}\aligned\label{APs}
P_{s,\phi}=f_0*\phi^\a*\phi^\be+f_1*\phi^\a*\na\phi^\be+f_2*\na\phi^\a*\na\phi^\be
\endaligned
\end{equation}
and
\begin{equation}\aligned\label{AQs1}
Q_{s,\phi}=&\tilde{f}_0*\phi^\a*\phi^\be+\tilde{f}_1*\phi^\a*\na\phi^\be+\tilde{f}_2*\na\phi^\a*\na\phi^\be\\
&+\tilde{f}_3*\phi^\a*\na^2\phi^\be+\tilde{f}_4*\na\phi^\a*\na^2\phi^\be,
\endaligned
\end{equation}
where $f_i$ are smooth vector fields depending on $s$, $\phi^\a,\na\phi^\a$, and
$\tilde{f}_i$ are smooth vector fields depending on $s$, $\phi^\a$, $\na\phi^\a$, $\na^2\phi^\a$ such that if $\sum_{0\le k\le2,\a}|\na^k\phi^\a|$ is bounded, then $\sum_{i=0}^2\left(|f_i|+|\na f_i|\right)+\sum_{i=0}^4|\tilde{f}_i|$ is bounded for the sufficiently small $s>0$. Note that the precise form of $P_{s,\phi}$ and $Q_{s,\phi}$ may be  different  even in the same line.

From now on, we assume that $s$ is sufficiently small. Then
\begin{equation}\aligned\label{AQs2}
\left|\tilde{\textbf{n}}^\a_s(p)\right|=\sqrt{1+s^2P_{s,\phi}}=1+s^2P_{s,\phi}.
\endaligned
\end{equation}
Let $$\textbf{n}^\a_s(p)=\f{\tilde{\textbf{n}}^\a_s(p)}{\left|\tilde{\textbf{n}}^\a_s(p)\right|},$$
then $\{\textbf{n}^\a_s(p)\}$ forms a basis (not necessarily orthonormal) for the normal space $NM_s$ at the point $F(p,s)$ and $\textbf{n}^\a_0(p)=\textbf{n}^\a(p)$.
Then one has
\begin{equation}\aligned\label{Ana}
\textbf{n}^\a_s(p)=\textbf{n}^\a(p)-s\na\phi^\a+s^2P_{s,\phi}.
\endaligned
\end{equation}
From $\bn_{\textbf{n}^\a}e_i=0$, a direct computation implies
\begin{equation}\aligned
\f{\p}{\p s}\left\lan\bn_{e_i}\textbf{n}^\a,e_j\right\ran=&\phi^\be\bn_{\textbf{n}^\be}\left\lan\bn_{e_i}\textbf{n}^\a,e_j\right\ran
=\phi^\be\left\lan\bn_{\textbf{n}^\be}\bn_{e_i}\textbf{n}^\a,e_j\right\ran\\
=&\phi^\be\left\lan\bn_{e_i}\bn_{\textbf{n}^\be}\textbf{n}^\a,e_j\right\ran+\phi^\be\left\lan\bn_{[\textbf{n}^\be,e_i]}\textbf{n}^\a,e_j\right\ran
\endaligned
\end{equation}
at the point $F(p,s)$.
Since
\begin{equation}\aligned
\left\lan\bn_{\textbf{n}^\be}\textbf{n}^\a,e_i\right\ran=-\left\lan\textbf{n}^\a,\bn_{\textbf{n}^\be}e_i\right\ran=0,
\endaligned
\end{equation}
then with $\na^\bot\mathbf{n}^\a=0$ one has
\begin{equation}\aligned
\left\lan\bn_{e_i}\bn_{\textbf{n}^\be}\textbf{n}^\a,e_j\right\ran=-\left\lan\bn_{\textbf{n}^\be}\textbf{n}^\a,\bn_{e_i}e_j\right\ran=0,
\endaligned
\end{equation}
and then
\begin{equation}\aligned\label{fppseiejna}
\f{\p}{\p s}\left\lan\bn_{e_i}\textbf{n}^\a,e_j\right\ran=&\phi^\be\left\lan\bn_{[\textbf{n}^\be,e_i]}\textbf{n}^\a,e_j\right\ran
=-\phi^\be\left\lan\bn_{\bn_{e_i}\textbf{n}^\be}\textbf{n}^\a,e_j\right\ran\\
=&-\phi^\be \left\lan\bn_{e_k}e_j,\textbf{n}^\a\right\ran\left\lan\bn_{e_i}e_k,\textbf{n}^\be\right\ran.
\endaligned
\end{equation}
Hence
\begin{equation}\aligned
\f{\p}{\p s}\Big|_{(p,0)}\bn_{e_i}\textbf{n}^\a=-\phi^\be h_{ik}^\be h_{jk}^\a e_j.
\endaligned
\end{equation}
Taking the derivative $\f{\p}{\p s}$ again on  both sides of \eqref{fppseiejna} implies
\begin{equation}\aligned
\bn_{e_i}\textbf{n}^\a=-h^\a_{ij}e_j(p)-s\phi^\be h_{ik}^\be h_{jk}^\a e_j(p)+s^2P_{s,\phi}.
\endaligned
\end{equation}
Denote $F'_i=F'_i(p,0)=\phi^\a_i\textbf{n}^\a(p)-\phi^\a h^\a_{ik}e_k(p)$, then
\begin{equation}\aligned
s\bn_{F'_i}\textbf{n}^\a=-s\phi^\be h_{ik}^\be\bn_{e_k}\textbf{n}^\a=s\phi^\be h_{ik}^\be h_{jk}^\a e_j(p)+s^2P_{s,\phi}.
\endaligned
\end{equation}
Hence at the point $F(p,s)$ we get
\begin{equation}\aligned
-\bn_{F_i}\textbf{n}^\a_s(p)=&-\bn_{e_i}\textbf{n}^\a+s\na_{e_i}\na\phi^\a-s\bn_{F'_i}\textbf{n}^\a+s^2Q_{s,\phi}\\
=&h^\a_{ij}e_j(p)+s\mathrm{Hess}_{\phi^\a}\left(e_i(p),e_j(p)\right)e_j(p)+s^2Q_{s,\phi}.
\endaligned
\end{equation}
Therefore, at the point $F(p,s)$ one has
\begin{equation}\aligned
\left\lan\na_{F_i}F_j,\textbf{n}^\a_s(p)\right\ran=-\left\lan\bn_{F_i}\textbf{n}^\a_s(p),F_j\right\ran=h^\a_{ij}+s\mathrm{Hess}_{\phi^\a}\left(e_i(p),e_j(p)\right)-s\phi^\be h^\be_{jk}h^\a_{ik}+s^2Q_{s,\phi}.
\endaligned
\end{equation}
Let $H_s(p)=(H^1_s(p),\cdots,H^m_s(p))$ denote the mean curvature vector of $M_s$ at $F(p,s)$.
Since $g^{ij}=\de_{ij}+2s\phi^\a h^\a_{ij}+s^2P_{s,\phi}$, then
\begin{equation}\aligned
H^\a_s(p)\triangleq&g^{ij}\left\lan\bn_{F_i}F_j,\textbf{n}^\a_s(p)\right\ran\\
=&H^\a(p)+s\left(\De_M\phi^\a+\phi^\be h^\be_{ij}h^\a_{ij}\right)+s^2Q_{s,\phi}.
\endaligned
\end{equation}
Let
$$\tau_{\a\be}(p,s)=\lan\textbf{n}^\a_s(p),\textbf{n}^\be_s(p)\ran=\de_{\a\be}+s^2P_{s,\phi}$$
at $F(p,s)$, and $(\tau^{\a\be}(p,s))$ be the inverse matrix of $(\tau_{\a\be}(p,s))$.
Then
\begin{equation}\aligned\label{Hspabehnabe}
H_s(p)=\tau^{\a\be}H^\a_s(p)\textbf{n}^\be_s=H^\a(p)\textbf{n}^\a_s(p)+s\left(\De_M\phi^\a+\phi^\be h^\be_{ij}h^\a_{ij}\right)\textbf{n}_s^\a(p)+s^2Q_{s,\phi}.
\endaligned
\end{equation}
Put $\De^\bot_M \xi=(\na^\bot)^2\xi(e_i,e_i)$ for  $\xi\in\G(NM)$, and $\Phi=\phi^\a \textbf{n}^\a$. In particular, if $M$ is a minimal submanifold, then
\begin{equation}\aligned
H_s(p)=s\left(\De^\bot_M\Phi+\lan\Phi, A_{ij}\ran A_{ij}\right)+s^2Q_{s,\phi}.
\endaligned
\end{equation}

\section{Appendix II: Algebraic inequalities}

Here we state an algebraic result, which is sharp for $m=1$.
\begin{lemma}\label{smallT}
Let $S_{i\a}$ be an $(n\times m)$-real matrix. Put $S_\a=\left(\sum_{i=1}^n S_{i\a}^2\right)^{1/2}$ for each $\a=1,\cdots,m$. If $\sum_{\a=1}^{m-1}S_\a\le\min\{2\sqrt{2},2/S_m\}$, then
\begin{equation}\aligned\label{Saj1*}
\mathrm{det}\left(\de_{ij}+\sum_{\a=1}^m S_{i\a}S_{j\a}\right)\le1+\left(\sum_{\a=1}^mS_\a\right)^2.
\endaligned
\end{equation}
\end{lemma}
\begin{proof}
Let us prove this lemma by induction. Clearly, the inequality \eqref{Saj1*} holds for $m=1$.
Assume that \eqref{Saj1*} holds for $m-1$ with $m\ge2$.
Let $\La$ be a diagonal $(n\times n)$-matrix with eigenvalues $1,\cdots,1,\sqrt{1+S_m^2}$.
Then there is an orthonormal matrix $P=(p_{ij})_{n\times n}$ such that the matrix $(\de_{ij}+S_{im}S_{jm})=P\La^2P^T$.
Let $S'_{i\a}=\sum_{k=1}^np_{ki}S_{k\a}$ for $i=1,\cdots,n-1$, and $S'_{n\a}=(1+S_m^2)^{-1/2}\sum_{k=1}^np_{kn}S_{k\a}$.
Let $Q=(q_{ij})$ be a matrix defined by $q_{ij}=\sum_{\a=1}^{m-1} S_{i\a}S_{j\a}$, and $Q'=(q'_{ij})$ be a matrix defined by $q'_{ij}=\sum_{\a=1}^{m-1} S'_{i\a}S'_{j\a}$.
Then $Q'=\La^{-1}P^TQP\La^{-1}$, and
\begin{equation}\aligned\label{detdeijSia}
\mathrm{det}\left(\de_{ij}+\sum_{\a=1}^m S^\a_iS^\a_j\right)=&\left|\mathrm{det}(\La P^T)\right|^2\mathrm{det}\left(\La^{-1}P^T(P\La^2P^T+Q)P\La^{-1}\right)\\
=&\left(1+S_m^2\right)\mathrm{det}\left(I+Q'\right).
\endaligned
\end{equation}
Moreover, let $S'_\a=\left(\sum_{i=1}^n (S'_{i\a})^2\right)^{1/2}$ for each $\a=1,\cdots,m-1$, then
\begin{equation}\aligned
(S'_\a)^2=\sum_{i=1}^n (S'_{i\a})^2\le \sum_{i=1}^n \left(\sum_{k=1}^np_{ki}S_{k\a}\right)^2=\sum_{i=1}^n \sum_{k,l=1}^np_{ki}p_{li}S_{k\a}S_{l\a}=\sum_{k=1}^n S_{k\a}^2=S_\a^2,
\endaligned
\end{equation}
Since $\sum_{\a=1}^{m-1}S_\a\le\min\{2\sqrt{2},2/S_m\}$, then for $m\ge3$ we have $\sum_{\a=1}^{m-2}S'_\a\le\sum_{\a=1}^{m-2}S_\a\le 2\sqrt{2}$ and
\begin{equation}\aligned
\sum_{\a=1}^{m-2}S'_\a S'_{m-1}\le\f14\left(\sum_{\a=1}^{m-2}S'_\a+S'_{m-1}\right)^2\le\f14\left(\sum_{\a=1}^{m-1}S_\a\right)^2\le2.
\endaligned
\end{equation}
By assumption, \eqref{Saj1*} holds for $m-1$ with $m\ge2$, which implies
\begin{equation}\aligned\label{IQ'}
\mathrm{det}\left(I+Q'\right)\le 1+\left(\sum_{\a=1}^{m-1}S_\a\right)^2
\endaligned
\end{equation}
for $m\ge3$.
It is clear that \eqref{IQ'} holds for $m=2$. Substituting \eqref{IQ'} into \eqref{detdeijSia} implies
\begin{equation}\aligned
&\mathrm{det}\left(\de_{ij}+\sum_{\a=1}^m S_{i\a}S_{j\a}\right)\le\left(1+S_m^2\right)\left(1+\left(\sum_{\a=1}^{m-1}S_\a\right)^2\right)\\
=&1+\left(\sum_{\a=1}^{m-1}S_\a\right)^2+S_m^2+\left(\sum_{\a=1}^{m-1}S_\a\right)^2S_m^2\\
\le& 1+\left(\sum_{\a=1}^{m-1}S_\a\right)^2+S_m^2+2\left(\sum_{\a=1}^{m-1}S_\a\right)S_m=1+\left(\sum_{\a=1}^{m}S_\a\right)^2,
\endaligned
\end{equation}
where we have used $\sum_{\a=1}^{m-1}S_\a S_m\le2$ in the above inequality.
This completes the proof.
\end{proof}

If we assume $\sum_{\a=1}^mS_\a\le2\sqrt{2}$, then from the Cauchy-Schwarz inequality,
$$\sum_{\a=1}^{m-1}S_\a S_m\le\f14\left(\sum_{\a=1}^{m-1}S_\a+S_m\right)^2\le\f14\times8=2.$$
From Lemma \ref{smallT}, we immediately have the following result.
\begin{lemma}\label{sssss}
Let $S_{i\a}$ be an $(n\times m)$-real matrix. Put $S_\a=\left(\sum_{i=1}^n S_{i\a}^2\right)^{1/2}$ for  $\a=1,\cdots,m$. If $\sum_{\a=1}^mS_\a\le2\sqrt{2}$, then
\begin{equation}\aligned\label{le9*}
\mathrm{det}\left(\de_{ij}+\sum_{\a=1}^m S_{i\a}S_{j\a}\right)\le1+\left(\sum_{\a=1}^mS_\a\right)^2.
\endaligned
\end{equation}
\end{lemma}

\bibliographystyle{amsplain}

\end{document}